\newcommand{\psiav}{\psi_{av}}
\newcommand{\psieps}{\psi_{\varepsilon}}
\newcommand{\md}{\mathrm{d}}
\newcommand{\lag}{\langle}
\newcommand{\rag}{\rangle}
\newcommand{\CC}{\mathcal{C}}
\newcommand{\LL}{\mathcal{L}}
\newcommand{\R}{\mathbb{R}}
\newcommand{\C}{\mathbb{C}}
\newcommand{\N}{\mathbb{N}}
\numberwithin{equation}{section}
\theoremstyle{definition}
\newtheorem{hypo}{Hypotheses}[section]
\theoremstyle{plain}
\newtheorem{prop}{Proposition}[section]
\newtheorem{lemme}{Lemma}[section]
\newtheorem{theo}{Theorem}[section]
\theoremstyle{remark}
\newtheorem{rmq}{Remark}[section]
\title{Explicit approximate controllability of the Schr\"odinger equation with a polarizability term}
\author{
Morgan \textsc{Morancey}
\footnote{CMLA UMR 8536, ENS Cachan, 61 avenue du Président Wilson, 94235 Cachan, FRANCE.
email: Morgan.Morancey@cmla.ens-cachan.fr 
\newline \indent
CMLS UMR 7640, Ecole Polytechnique, 91128 Palaiseau, FRANCE.}
\thanks{The author was partially supported by the ``Agence Nationale de la Recherche'' (ANR),
Projet Blanc EMAQS number ANR-2011-BS01-017-01.}
}
\begin{document}

\maketitle


\hrule
\begin{abstract}
We consider a controlled Schr\"odinger equation with a dipolar and a polarizability term, used when the dipolar approximation is not valid. The control is the amplitude of the external electric field, it acts non linearly on the state. We extend in this infinite dimensional framework previous techniques used by Coron, Grigoriu, Lefter and Turinici for stabilization in finite dimension. We consider a highly oscillating control and prove the semi-global weak $H^2$ stabilization of the averaged system using a Lyapunov function introduced by Nersesyan. Then it is proved that the solutions of the Schr\"odinger equation and of the averaged equation stay close on every finite time horizon provided that the control is oscillating enough. Combining these two results, we get approximate controllability to the ground state for the polarizability system with explicit controls. Numerical simulations are presented to illustrate those theoretical results.
\end{abstract}
\hrule

$\\$
\textbf{Key words : } Approximate controllability, Schr\"odinger equation, polarizability, oscillating controls, averaging, feedback stabilization, LaSalle invariance principle.


\section{Introduction}

\subsection{Main result}
\label{sect_main_result}

$\indent$ Following \cite{RouchonModele} we consider a quantum particle in a potential $V(x)$ and an electric field of amplitude $u(t)$. We assume that the dipolar approximation is not valid (see \cite{Dion_1,Dion_2}). Then, the particle is represented by its wave function $\psi(t,x)$ solution of the following Schr\"odinger equation
\begin{equation}
\label{syst}
\left\{
\begin{aligned}
i\partial_t \psi &= (-\Delta + V(x)) \psi  + u(t)Q_1(x) \psi + u(t)^2 Q_2(x) \psi, \quad  x \in D ,
\\
\psi_{| \partial D} &= 0 ,
\end{aligned}
\right.
\end{equation}
with initial condition
\begin{equation}
\psi(0,x) = \psi^0(x), \quad x \in D,
\label{syst3}
\end{equation}
where $D \subset \R^m$ is a bounded domain with smooth boundary. The functions $V, Q_1, Q_2 \in C^\infty(\overline{D},\mathbb{R})$ are given, $Q_1$ is the dipolar moment and $Q_2$ the polarizability moment.
For the sake of simplicity, we denote by $L^2$, $H^1_0$ and $H^2$ respectively the usual Lebesgue and Sobolev spaces $L^2(D,\C)$, $H^1_0(D,\C)$ and $H^2(D,\C)$.
The following well-posedness result holds (see \cite{Caz}) by application of the Banach fixed point theorem.
\begin{prop}
\label{existence}
For any $\psi^0 \in H^1_0 \cap H^2$ and $u \in L^2_{loc}([0,+\infty), \mathbb{R})$, the system (\ref{syst})-(\ref{syst3}) has a unique weak solution $\psi \in C^0([0,+\infty), H^1_0 \cap H^2)$. Moreover, for all $t>0$, $||\psi(t,.)||_{L^2} = ||\psi^0||_{L^2}$ and there exists $C=C(Q_1,Q_2)>0$ such that for any $t>0$,
\begin{equation*}
|| \psi(t,\cdot) ||_{H^2} \leq || \psi^0 ||_{H^2} e^{C \int_0^t |u(\tau)| + |u(\tau)|^2 \md \tau} .
\end{equation*}
\end{prop}

Let $S:=\big\{ \psi \in L^2(D,\mathbb{C}) ; ||\psi||_{L^2}=1 \big\}$ and $\lag \cdot ,\cdot \rag$ be the usual scalar product on $L^2(D,\mathbb{C})$
\begin{equation*}
\lag f, g \rag = \int_D f(x) \overline{g(x)} \md x, \quad \text{for } f,g \in L^2(D,\mathbb{C}).
\end{equation*}
 We consider the operator $(-\Delta + V)$ with domain $H^1_0 \cap H^2$ and denote by $(\lambda_k)_{k\in \mathbb{N}^*}$ the non decreasing sequence of its eigenvalues and by $(\phi_k)_{k\in \mathbb{N}^*}$ the associated eigenvectors in $S$. The family $(\phi_k)_{k \in \N^*}$ is a Hilbert basis of $L^2$. 
\\
Our goal is to stabilize the ground state. As the global phase of the wave function is physically meaningless, our target set is
\begin{equation}
\label{def_C}
\CC:= \left\{ c\phi \, ; \, c\in \mathbb{C} \text{ and } |c|=1 \right\},
\end{equation}
where $\phi:= \phi_1$.

Let $J_{\neq 0}:= \Big\{ k \geq 2 ; \lag Q_1\phi , \phi_k \rag \neq 0 \Big\}$ and $J_0 := \Big\{ k \geq 2 ; \lag Q_1\phi , \phi_k \rag = 0 \Big\}$. We assume that the following hypotheses hold.

\begin{hypo}
\label{hypo}
\hspace*{5mm}

\begin{enumerate}[i)]
\item $\forall k \in J_0, \lag Q_2\phi , \phi_k\rag \neq 0$ i.e. all coupling are realized either by $Q_1$ or $Q_2$,
\item $Card(J_0) < \infty$ i.e. only a finite number of coupling is missed by $Q_1$,
\item $\lambda_1 - \lambda_k \neq \lambda_p -\lambda_q$ for $k,p,q \geq 1$ such that $\{1,k\} \neq \{p,q\}$ and $k\neq 1$,
\end{enumerate}
\end{hypo}
\begin{rmq}
The hypothesis \textit{i)} is weaker than the one in \cite{BeauchardNersesyan} (i.e. $J_0 = \emptyset$). As proved in \cite[Section 3.4]{Nersesyan}, we get that generically with respect to $Q_1$ and $Q_2$ in $C^{\infty}(\overline{D},\R)$, the scalar products $\lag Q_1 \phi , \phi_k \rag$ and $\lag Q_2 \phi , \phi_k \rag$ are all non zero. The spectral assumption \textit{iii)} does not hold in every physical situation. For example, it is not satisfied in 1D if $V = 0$. However, it is proved in \cite[Lemma 3.12]{Nersesyan} that if $D$ is the rectangle $[0,1]^n$, Hypothesis 1.1 \textit{iii)} hold generically with respect to $V$ in the set $\mathcal{G} := \big\{ V \in C^{\infty}(D,\R) \, ; \, V(x_1,\dots,x_n)=V_1(x_1)+\dots + V_n(x_n), \text{ with } V_k \in C^{\infty}([0,1],\R) \big\}$.
\\
\end{rmq}

As in \cite{CGLT}, we use a time-periodic oscillating control of the form 
\begin{equation}
\label{defu}
u(t,\psi) := \alpha(\psi) + \beta(\psi) \sin \left( \frac{t}{\varepsilon} \right).
\end{equation}
\\
Following classical techniques (see e.g. \cite{SandersVerhulst07}) of dynamical systems in finite dimension let us introduce the averaged system
\begin{equation}
\label{eqaverage2}
\left\{
\begin{aligned}
i \partial_t \psi_{av} &= \big( -\Delta + V(x) \big) \psiav + \alpha(\psiav)Q_1(x) \psiav 
\\
&+ \left( \alpha(\psiav)^2 + \frac{1}{2} \beta(\psiav)^2 \right) Q_2(x) \psiav ,
\\
\psi_{av_{| \partial D}}  &= 0,
\end{aligned}
\right.
\end{equation}
with initial condition
\begin{equation}
\label{CI_psiav}
\psiav(0,\cdot) = \psi^0.
\end{equation}

\noindent
Let $P$ be the orthogonal projection in $L^2$ onto the closure of Span $\{ \phi_k ; k \geq 2 \}$ and $\gamma$ be a positive constant (to be determined later). 
\\
Our stabilization strategy relies on the following Lyapunov function (used in \cite{BeauchardNersesyan}) defined on $S \cap H^1_0 \cap H^2$ by
\begin{equation}
\label{def_Lyapunov}
\mathcal{L}(\psi) := \gamma || (-\Delta + V) P \psi ||_{L^2}^2  + 1  - | \lag \psi , \phi \rag |^2 . 
\end{equation}
This leads to feedback laws given by
\begin{equation}
\label{feedback}
\alpha(\psiav(t,\cdot)) := -k I_1(\psiav(t,\cdot)), \quad \quad \beta(\psiav(t,\cdot)) := g(I_2(\psiav(t,\cdot)),
\end{equation}
with $k>0$ small enough and 
\begin{equation}
\label{defg}
g \in C^2(\mathbb{R}, \mathbb{R}^+) \text{ satisfying } g(x)=0 \text{ if and only if } x\geq 0, \ 
 g' \text{ bounded},
\end{equation} 
and for $j \in \{1,2\}$, for $z \in H^2$,
\begin{equation}
\label{defI}
I_j(z) = \text{Im} \Big[ \gamma \lag (-\Delta +V)P \, Q_j z , (-\Delta+V) P z \rag 
 - \lag Q_j z, \phi\rag \lag \phi,z\rag \Big].
\end{equation}

\noindent
We can now state the well-posedness of the averaged closed loop system (\ref{eqaverage2}).
\begin{prop}
\label{prop_reg}
Let $R>0$. There exists $k_0=k_0 \left( V, Q_2, R \right) > 0$ such that for any $\psi^0 \in H^2 \cap H^1_0 \cap S$ with $\LL(\psi^0) < R$ and $k \in (0,k_0)$, the closed-loop system (\ref{eqaverage2})-(\ref{CI_psiav})-(\ref{feedback}) has a unique solution $\psiav \in C^0([0,+\infty), H^2 \cap H^1_0)$. 
There exists $M > 0$ such that
\begin{equation}
\label{maj_psiav}
|| \psiav(t) ||_{H^2} \leq M, \quad \forall t \geq 0.
\end{equation}
Moreover, if $\Delta \psi^0 \in H^1_0 \cap H^2$, then $\Delta \psiav \in C^0([0,+\infty), H^1_0 \cap H^2)$.
\end{prop}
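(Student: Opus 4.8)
The plan is to write $A:=-\Delta+V$, which generates the unitary group $(e^{-itA})_t$ on $L^2$ and has domain $X:=H^2\cap H^1_0$, and to treat the closed-loop system as a semilinear equation $i\partial_t\psiav=A\psiav+F(\psiav)$ with $F(\psi)=\alpha(\psi)Q_1\psi+\big(\alpha(\psi)^2+\tfrac12\beta(\psi)^2\big)Q_2\psi$. First I would establish local well-posedness in $X$ by a Banach fixed point on the Duhamel formulation, exactly as for Proposition \ref{existence}. The only new ingredient is that the feedback coefficients are now state-dependent: since $Q_j\in C^\infty$ and $P$ both preserve $X$, the functionals $z\mapsto I_j(z)$ of (\ref{defI}) are quadratic, hence locally Lipschitz from $X$ to $\R$, and $g\in C^2$; thus $\alpha,\beta$ are locally Lipschitz on $X$ and $F:X\to X$ is locally Lipschitz. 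This yields a unique maximal solution $\psiav\in C^0([0,T^*),X)$ with the blow-up alternative: either $T^*=+\infty$ or $\|\psiav(t)\|_{H^2}\to+\infty$ as $t\uparrow T^*$. Conservation $\|\psiav(t)\|_{L^2}=1$ follows from the self-adjointness of the real multiplication operator $B(t):=\alpha Q_1+(\alpha^2+\tfrac12\beta^2)Q_2$.

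The heart of the proof is the dissipation of $\LL$. The computation of $\tfrac{d}{dt}\LL(\psiav)$ requires $\psiav(t)\in D(A^2)$, so when $\Delta\psi^0\in X$ (equivalently $\psi^0\in D(A^2)$) I would first propagate this extra regularity by running the fixed-point argument one level higher, which simultaneously gives the last assertion $\Delta\psiav\in C^0([0,T^*),X)$. Using $[A,P]=0$, the self-adjointness of $A$ and of $B(t)$, and the fact that the top-order term $\lag A^2P\psiav,AP\psiav\rag$ and the term $\lambda_1|\lag\psiav,\phi\rag|^2$ are real (hence annihilated by the $\Re(-i\,\cdot)=\Im$ cancellation), a direct computation reorganizes all remaining contributions into exactly the functionals (\ref{defI}), giving
$$\frac{d}{dt}\LL(\psiav)=2\alpha\,I_1(\psiav)+2\Big(\alpha^2+\tfrac12\beta^2\Big)I_2(\psiav).$$
Substituting the feedback (\ref{feedback}) and using $g(x)^2x\le 0$ for all $x$ (a consequence of (\ref{defg})) yields
$$\frac{d}{dt}\LL(\psiav)=-2k\,I_1(\psiav)^2\big(1-k\,I_2(\psiav)\big)+g(I_2(\psiav))^2I_2(\psiav)\le -2k\,I_1(\psiav)^2\big(1-k\,I_2(\psiav)\big).$$

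I would then close the estimate without circularity. By elliptic regularity $\|AP\psi\|_{L^2}$ controls $\|P\psi\|_{H^2}$, so on the sublevel set $\{\LL<R\}\cap S$ there is an a priori bound $\|\psi\|_{H^2}\le M$ with $M=M(R,\gamma,V)$ (using $\|\psi\|_{L^2}=1$ and that the $\phi$-component is bounded). This $M$ in turn bounds $|I_2(\psi)|\le C(V,Q_2)M^2$ on that set, and I would choose $k_0=k_0(V,Q_2,R)$ so that $k_0\,C(V,Q_2)M^2\le 1$. For $k\in(0,k_0)$ the right-hand side above is $\le 0$ whenever $\LL(\psiav(t))<R$, so the bound $\LL(\psiav(t))\le\LL(\psi^0)<R$ cannot be violated; hence $\|\psiav(t)\|_{H^2}\le M$ for all $t\in[0,T^*)$, and the blow-up alternative forces $T^*=+\infty$. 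This proves global existence and (\ref{maj_psiav}) for data in $D(A^2)$. For general $\psi^0\in X\cap S$ with $\LL(\psi^0)<R$ I would approximate by $\psi^{0,n}\in D(A^2)\cap S$ with $\LL(\psi^{0,n})<R$ (possible since $\LL$ is $H^2$-continuous and $D(A^2)$ is dense in $X$), use the local Lipschitz dependence on the data to get convergence in $C^0([0,T],X)$ on every $[0,T]$, and pass the uniform bound $M$ to the limit.

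The main obstacle is the second step: making the formal Lyapunov computation rigorous (propagating $D(A^2)$-regularity and justifying each integration by parts) and, above all, extracting the sign of $\tfrac{d}{dt}\LL$. The delicate point is that the potentially destabilizing contribution $2\alpha^2I_2$ is of order $k^2$ and is dominated by the order-$k$ term $-2k\,I_1^2$ only \emph{after} the uniform control of $I_2$ on the sublevel set, which is exactly what dictates the dependence of $k_0$ on $R$ and $Q_2$ (and, notably, not on $Q_1$, since the decisive factor $I_1^2$ is nonnegative regardless).
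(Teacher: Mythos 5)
Your proposal is correct and follows essentially the same route as the paper: local well-posedness by a fixed point with a blow-up alternative, the dissipation identity (\ref{lyapunov2}), a uniform bound on $I_2$ over the sublevel set $\{\LL < R\}$ via (\ref{borneh2}) dictating the choice of $k_0$, and a continuity bootstrap (the paper runs it on the condition $|I_2(\psiav(t))|\le K$ with $K=2f\bigl(\sqrt{\tilde{C}(1+R)}\bigr)$ and $k_0=1/K$, you run it on $\LL(\psiav(t))<R$ directly --- the two are equivalent) yielding global existence and (\ref{maj_psiav}). Your two-tier handling of regularity --- first proving the Lyapunov identity rigorously for data with $\Delta\psi^0\in H^1_0\cap H^2$ by running the fixed point one level higher, then extending to general $H^2\cap H^1_0\cap S$ data by density and continuous dependence --- is actually more explicit than the paper, which derives (\ref{lyapunov2}) only under that extra regularity assumption and settles the propagation of regularity with the single remark that one takes the time derivative of the equation.
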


\noindent
We define $X_0 := \left\{ \psi^0 \in S \cap H^1_0 \cap H^2 ; \Delta \psi^0 \in H^1_0 \cap H^2 \right\}$ the set of admissible initial conditions. For an initial condition $\psi^0 \in X_0$, we define the control 
\begin{equation}
\label{defu_eps}
u^{\varepsilon}(t) := \alpha(\psiav(t)) + \beta(\psiav(t)) \sin \left( \frac{t}{\varepsilon} \right),
\end{equation}
where $\psiav$ is the solution of (\ref{eqaverage2})-(\ref{CI_psiav})-(\ref{feedback}).

The main result of this article is the following one.
\begin{theo}
\label{theo_suite}
Assume that Hypotheses \ref{hypo} hold. Let $\CC$, the target set, be defined by (\ref{def_C}). There exists $k_0=k_0(V,Q_2)>0$ such that for any $k \in k_0$, for any $s<2$ and for any $\psi^0 \in X_0$ with $0< \LL(\psi^0) < 1$, there exist an increasing time sequence $(T_n)_{n\in \mathbb{N}}$ in $\mathbb{R}^*_+$ tending to $+\infty$ and a decreasing sequence $(\varepsilon_n)_{n \in \N}$ in $\R^*_+$ such that if $\psieps$ is the solution of (\ref{syst})-(\ref{syst3}) associated to the control $u^{\varepsilon}$ defined by (\ref{defu_eps}) then for all $n \in \N$, if $\varepsilon \in (0,\varepsilon_n)$,
\begin{equation*}
\text{dist}_{H^s} \left( \psieps(t,\cdot) , \CC \right)  \leq \frac{1}{2^n}, \quad \forall t \in [T_n,T_{n+1}] .
\end{equation*} 
\end{theo}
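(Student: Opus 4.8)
The plan is to decouple the two mechanisms announced in the introduction and then glue them with a diagonal argument. Step (A): I would prove that the averaged closed-loop flow \eqref{eqaverage2}--\eqref{CI_psiav}--\eqref{feedback} stabilizes the target, i.e. $\mathrm{dist}_{H^s}(\psiav(t),\CC)\to 0$ as $t\to+\infty$ for every $s<2$, whenever $0<\LL(\psi^0)<1$ and $k$ is small. Step (B): for each \emph{fixed} finite horizon $T$ I would establish an averaging estimate $\sup_{[0,T]}\|\psieps(t)-\psiav(t)\|_{H^s}\le C(T)\,\omega(\varepsilon)$ with $\omega(\varepsilon)\to0$ as $\varepsilon\to0$. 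The conclusion then follows from the triangle inequality $\mathrm{dist}_{H^s}(\psieps,\CC)\le \|\psieps-\psiav\|_{H^s}+\mathrm{dist}_{H^s}(\psiav,\CC)$ after choosing the two sequences appropriately.

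\textbf{Step (A): Lyapunov decay and LaSalle.} Since $(-\Delta+V)$ commutes with $P$ and $\phi$ is an eigenvector, differentiating \eqref{def_Lyapunov} along \eqref{eqaverage2} makes the free dynamics drop out and leaves, with effective controls $\alpha$ and $\alpha^2+\tfrac12\beta^2$,
\[
\tfrac{\md}{\md t}\LL(\psiav)=2\big[\alpha\,I_1(\psiav)+(\alpha^2+\tfrac12\beta^2)\,I_2(\psiav)\big],
\]
with $I_1,I_2$ as in \eqref{defI}. Inserting the feedback \eqref{feedback} gives $\tfrac{\md}{\md t}\LL(\psiav)=2\big[k\,I_1^2(kI_2-1)+\tfrac12 g(I_2)^2 I_2\big]$, which is $\le 0$: the second term is nonpositive because $g(x)=0\iff x\ge0$ in \eqref{defg}, and the first is nonpositive once $k$ is small enough that $kI_2<1$, the uniform $H^2$ bound \eqref{maj_psiav} of Proposition \ref{prop_reg} bounding $I_2$ and thereby fixing $k_0(V,Q_2)$. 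Thus $\LL(\psiav(t))$ is nonincreasing and $\ge0$, hence converges. Orbits are bounded in $H^2$ by \eqref{maj_psiav}, so weakly precompact in $H^2$ and strongly precompact in $H^s$; I would then invoke a weak-$H^2$ LaSalle principle (as in Nersesyan / Beauchard--Nersesyan, the natural topology since $\LL$ is only weakly lower semicontinuous): the $\omega$-limit set is invariant, the control vanishes on it, and there $\psiav$ evolves freely.

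\textbf{Characterizing the invariant set (main obstacle).} Expanding an invariant orbit as $\psiav(t)=\sum_k c_k e^{-i\lambda_k t}\phi_k$, the identity $I_1(\psiav(t))\equiv0$ together with the non-resonance Hypothesis \ref{hypo}~iii) separates the gaps $\lambda_1-\lambda_k$ and forces $\overline{c_1}c_k\lag Q_1\phi,\phi_k\rag=0$, whence $c_k=0$ for every $k\in J_{\neq 0}$. For the finitely many $k\in J_0$ (Hypothesis \ref{hypo}~ii)) I would use that $t\mapsto I_2(\psiav(t))$ is almost periodic with vanishing mean---its constant Fourier coefficient comes from diagonal, hence real, terms---while $g(I_2)=0$ forces $I_2\ge0$; a nonnegative almost periodic function of zero mean is identically $0$, so $I_2\equiv0$, and extracting the frequency $\lambda_k-\lambda_1$ gives, via Hypothesis \ref{hypo}~i) ($\lag Q_2\phi,\phi_k\rag\neq0$), that $c_k=0$. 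The assumption $0<\LL(\psi^0)<1$ keeps $|\lag\psiav,\phi\rag|$ away from $0$ along the decreasing $\LL$, so $c_1\neq0$ and the spurious equilibria $\phi_k$, $k\ge2$, are excluded. Thus the $\omega$-limit set is contained in $\CC$ and $\mathrm{dist}_{H^s}(\psiav(t),\CC)\to0$. I expect this invariance analysis---the resonance bookkeeping and especially the zero-mean/nonnegativity trick that recovers the modes missed by $Q_1$---to be the delicate point, alongside justifying LaSalle in the weak $H^2$ topology.

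\textbf{Step (B) and assembly.} Setting $w=\psieps-\psiav$, subtracting \eqref{syst} and \eqref{eqaverage2} shows $w$ solves a Schr\"odinger equation whose source is the purely oscillatory mismatch $\beta\sin(t/\varepsilon)Q_1\psiav+\big(2\alpha\beta\sin(t/\varepsilon)-\tfrac12\beta^2\cos(2t/\varepsilon)\big)Q_2\psiav$, of zero mean and frequency of order $1/\varepsilon$. Integrating the Duhamel formula by parts in time gains a factor $\varepsilon$; the resulting derivative and boundary terms are controlled by $g'$ bounded \eqref{defg}, the bound \eqref{maj_psiav}, and the extra regularity $\Delta\psiav\in C^0([0,+\infty),H^1_0\cap H^2)$ furnished by Proposition \ref{prop_reg} for $\psi^0\in X_0$, so that after interpolation to $H^s$ ($s<2$) one gets $\sup_{[0,T]}\|w(t)\|_{H^s}\le C(T)\omega(\varepsilon)$. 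To finish, pick from Step (A) an increasing sequence $T_n\to+\infty$ with $\mathrm{dist}_{H^s}(\psiav(t),\CC)\le 2^{-(n+1)}$ on $[T_n,T_{n+1}]$, then a decreasing $\varepsilon_n$ such that $\sup_{[0,T_{n+1}]}\|w\|_{H^s}\le 2^{-(n+1)}$ for $\varepsilon<\varepsilon_n$; the triangle inequality yields $\mathrm{dist}_{H^s}(\psieps(t),\CC)\le 2^{-n}$ on $[T_n,T_{n+1}]$, as claimed.
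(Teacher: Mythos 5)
Your proposal is correct and follows essentially the same route as the paper: weak $H^2$ stabilization of the averaged system via the Lyapunov identity (\ref{lyapunov2}), a weak-$H^2$ LaSalle principle, and the invariant-set characterization combining Hypotheses \ref{hypo} with the non-resonance/almost-periodicity argument (your ``nonnegative almost periodic function of zero mean vanishes'' is exactly the mechanism the paper borrows from \cite{CGLT}), then the finite-horizon $O(\varepsilon)$ averaging estimate obtained by integrating the oscillatory Duhamel term by parts (the paper's Lemma \ref{lemme_AvgInf}, from \cite{HaleLunel90}, with the extra regularity $\psi^0 \in X_0$ used precisely where you indicate), and finally the identical diagonal choice of $(T_n)$ and $(\varepsilon_n)$ with the triangle inequality. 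The only cosmetic deviation is that you interpolate to $H^s$ in Step (B), whereas the paper proves the averaging estimate directly in $H^2$ and deduces the $H^s$ bound trivially.
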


\begin{rmq}
Theorem \ref{theo_suite} gives the semi-global approximate controllability with explicit controls of system (\ref{syst}). Hypotheses \ref{hypo} are  needed to ensure that the invariant set coincides with the target set. The semi-global aspect comes from the hypothesis $0< \LL(\psi^0) < 1$ : by reducing $\gamma$ (in a way dependant of $\psi^0$), this condition can be fulfilled as soon as $\psi^0 \notin \mathcal{C}$.

In Theorem \ref{theo_suite}, there is a gap between the $H^4$ regularity of the initial condition and the approximate controllability in $H^s$ with $s<2$. The extra-regularity is used in this article to prove an approximation property in $H^2$ between the oscillating system and the averaged one (see Section \ref{sect_appro}). Weakening this regularity assumption is an open problem for which an alternative strategy is required. The last lost of regularity comes from the application of a weak LaSalle principle instead of a strong one due to lack of compactness in infinite dimension. 
\end{rmq}

\subsection{A review of previous results}

$\indent$ In this section, we recall previous results about quantum systems with bilinear controls. The model (\ref{syst}) of an infinite potential well was proposed by Rouchon in \cite{RouchonModele} in the dipolar approximation ($Q_2 =0$). A classical negative result was obtained in \cite{BallMarsdenSlemrod82} by Ball, Marsden and Slemrod for infinite dimensional bilinear control systems. This result implies, for system (\ref{syst}) with $Q_2=0$, that the set of reachable states from any initial data in $H^2 \cap H^1_0 \cap S$ with control in $L^2(0,T)$ has a dense complement in $H^2 \cap H^1_0 \cap S$. However, exact controllability was proved in 1D by Beauchard in \cite{Beauchard05} for $V=0$ and $Q_1(x)=x$ in more regular spaces ($H^7$). This result was then refined in \cite{BeauchardLaurent} by Beauchard and Laurent for more general $Q_1$ and a regularity $H^3$.

The question of stabilization is addressed in \cite{BeauchardNersesyan} where Beauchard and Nersesyan extended previous results from Nersesyan \cite{Nersesyan}. They proved, under appropriate assumptions on $Q_1$, the semi-global weak $H^2$ stabilization of the wave function towards the ground state using explicit feedback control and Lyapunov techniques in infinite dimension.

However sometimes, for example in the case of higher laser intensities, this model is not efficient (see e.g. \cite{Dion_1,Dion_2}) and we need to add a polarizability term $u(t)^2Q_2(x) \psi$ in the model. This term, if not neglected, can also be helpful in mathematical proofs. Indeed the result of \cite{BeauchardNersesyan} only holds if $Q_1$ couples the ground state to any other eigenstate and then the use of the polarizability enables us to weaken this assumption. Mathematical use of the expansion of the Hamiltonian beyond the dipolar approximation was used by Grigoriu, Lefter and Turinici in \cite{GrigoriuLefterTurinici09, Turinici07}. A finite dimension approximation of this model was studied in \cite{CGLT} by Coron, Grigoriu, Lefter and Turinici. The authors proposed discontinuous feedback laws and periodic highly oscillating feedback laws to stabilize the ground state. In this article, we extend in our infinite dimensional framework their idea of using (time-dependent) periodic feedback laws. We also refer to the book \cite{CoronBook} by Coron for a comprehensive presentation of the feedback strategy and the use of time-varying feedback laws.

How to adapt the Lyapunov or LaSalle strategy in an infinite dimensional framework is not clear because closed bounded sets are not compact so the trajectories may lack compactness in the considered topology. In this direction we should cite some related works of Mirrahimi and Beauchard \cite{BeauchardMirrahimi09,Mirrahimi09} where the idea was to prove approximate convergence results. 
In this article, we will use an adaptation of the LaSalle invariance principle for weak convergence which was used for example in \cite{BeauchardNersesyan} by Beauchard and Nersesyan.
There are other strategies to show a strong stabilization property. Coron and d'Andréa-Novel proved in \cite{CAN} the compactness of the trajectories by a direct method for a beam equation and thus the strong stabilization. In \cite{Couchouron1,Couchouron2} Couchouron gave sufficient conditions to obtain the compactness in favorable cases where the control acts diagonally on the state. Another strategy to obtain strong results is to look for a strict Lyapunov function, which is an even trickier question, and was done for example in \cite{CoronAndreaNovelBastin07} by Coron, d'Andréa-Novel and Bastin for a system of conservation laws.

The question of approximate controllability has been addressed by various authors using various techniques. In \cite{Nersesyan10}, Nersesyan uses a Lyapunov strategy to obtain approximate controllability in large time in regular spaces. In \cite{CMSB09}, Chambrion, Mason, Sigalotti and Boscain proved approximate controllability in $L^2$ for a wider class of systems using geometric control tools for the Galerkin approximations. The hypotheses needed were weakened in \cite{BCCS11} and the approximate controllability was extended to some $H^s$ spaces in \cite{BoussaidCaponigroChambrion}.

Explicit approximate controllability in large time has also been obtained by Ervedoza and Puel in \cite{ErvedozaPuel09} on a model of trapped ion, using different tools.

\subsection{Structure of this article}

$\indent$ As announced in Section \ref{sect_main_result}, we study the system (\ref{syst}) by introducing a highly oscillating time-periodic control and the corresponding averaged system. Section \ref{sect_lyapu} is devoted to the introduction of this averaged system and its weak stabilization using Lyapunov techniques and an adaptation of the LaSalle invariance principle in infinite dimension.

In Section \ref{sect_appro} we study the approximation property between the solution of the averaged system and the solution of (\ref{syst}) with the same initial condition. We prove that on every finite time interval these two solutions remain arbitrarily close provided that the control is oscillating enough. This is an extension of classical averaging results for finite dimension dynamical systems.

Finally gathering the stabilization result of Section \ref{sect_lyapu} and the approximation property of Section \ref{sect_appro}, we prove Theorem \ref{theo_suite} in Section \ref{sect_result}.

Section \ref{sect_simulations} is devoted to numerical simulations illustrating several aspects of Theorem \ref{theo_suite} and of the averaging strategy.


\section{Stabilization of the averaged system}
\label{sect_lyapu}

\subsection{Definition of the averaged system}

$\indent$ System (\ref{syst}) with feedback law $u$ defined by (\ref{defu}) can be rewritten as
\begin{equation}
\label{syst4}
\left\{
\begin{aligned}
\partial_t \psi(t) &= A \psi(t) + F \left( \frac{t}{\varepsilon},\psi(t) \right), 
\\
\psi_{| \partial D} &= 0,
\end{aligned}
\right. 
\end{equation}
where the operator $A$ is defined by $D(A) := H^2 \cap H^1_0$,  $ A \psi := ( i\Delta - iV) \psi$ and
\begin{equation}
\label{defF}
F(s,z):= -i \left( \alpha(z) + \beta(z) \sin(s) \right) Q_1 z - i \left( \alpha(z) + \beta(z) \sin(s) \right)^2 Q_2 z .
\end{equation}

For any $z$, $F(.,z)$ is $T$-periodic (with here $T=2\pi$). Following classical techniques of averaging, we introduce $F^0(z):= \frac{1}{T} \int_0^{T} F(t,z) \md t$. We can define the averaged system associated to (\ref{syst4}) by 
\begin{equation}
\label{eqaverage}
\left\{
\begin{aligned}
\partial_t \psi_{av} &= A\psiav + F^0(\psiav),
\\
\psi_{av _{| \partial D}} &=0 .
\end{aligned}
\right.
\end{equation}
Straightforward computations of $F^0$ show that the system (\ref{eqaverage}) can be rewritten as (\ref{eqaverage2}).

We show by Lyapunov techniques that we can choose $\alpha$ and $\beta$ such that the solution of the averaged system (\ref{eqaverage}) is weakly convergent in $H^2$ towards our target set $\CC$.

\subsection{Control Lyapunov function and damping feedback laws}

$\indent$ Our candidate for the Lyapunov function, $\LL$, is defined in (\ref{def_Lyapunov}).
It is clear that $ \mathcal{L} (\psi) \geq 0$ whenever $\psi \in S \cap H^1_0 \cap H^2 $ and that $\mathcal{L}(\psi) = 0$ if and only if $\psi \in \mathcal{C} $.

The main advantage of this Lyapunov function is that it can be used to bound the $H^2$ norm. In fact,  for any $\psi \in S \cap H^1_0 \cap H^2$,
\begin{equation*}
\mathcal{L} (\psi) 
\geq \gamma || (-\Delta +V) P \psi ||_{L^2}^2
\geq \frac{\gamma}{2} || \Delta(P\psi) ||_{L^2}^2 - C
\geq \frac{\gamma}{4} || \Delta \psi ||_{L^2}^2 - C ,
\end{equation*}
where here, as in all this article, $C$ is a positive constant possibly different each time it appears. This leads to the existence of $\tilde{C} >0$ satisfying
\begin{equation}
\label{borneh2}
|| \psi ||_{H^2}^2 \leq \tilde{C} ( 1+\mathcal{L}(\psi) ), \quad \forall\psi \in S \cap H^1_0 \cap H^2.
\end{equation}

\begin{rmq}
Although the idea of using a feedback of the form (\ref{defu}) is inspired by \cite{CGLT}, the construction of the Lyapunov function and of the controls is here different because we are dealing with an infinite dimensional framework. We follow the strategy used in \cite{Nersesyan,BeauchardNersesyan}.
\end{rmq}

\paragraph*{Choice of the feedbacks.}

We would like to choose the feedbacks $\alpha$ and $\beta$ such that for all $t \geq 0$, $\dfrac{d}{dt} \mathcal{L} (\psiav(t)) \leq 0$ where $ \psiav $ is the solution of (\ref{eqaverage2}),(\ref{CI_psiav}).

If $\Delta \psiav(t) \in H^1_0 \cap H^2$ for all $t\geq 0$ then
\begin{align*}
\frac{\mathrm d}{\mathrm dt} & \mathcal{L}(\psiav(t))
= 2 \gamma \text{Re} \big[ \lag (-\Delta+V) P \partial_t \psi_{av} , (-\Delta +V) P\psiav\rag \big]   
\\
&- 2\text{Re} \big[ \lag \partial_t \psi_{av} , \phi\rag \lag \phi , \psiav\rag \big] 
\\
&= 2 \gamma \text{Re} \Big[ \lag (-\Delta +V) P \big(i\Delta \psiav - iV\psiav - i\alpha Q_1 \psiav 
\\
&- i( \alpha^2 + \frac{1}{2} \beta ^2 ) Q_2 \psiav \big) , (-\Delta +V ) P\psiav \rag \Big] 
\\
&  -2 \text{Re} \Big[ \lag i\Delta \psiav - iV\psiav - i\alpha Q_1 \psiav 
- i( \alpha^2 + \frac{1}{2} \beta ^2 ) Q_2 \psiav , \phi \rag \lag \phi ,\psiav\rag \Big].
\end{align*}
\\
Then we perform integration by parts. As $P$ commutes with $(-\Delta +V)$, $V$ is real and thanks to the following boundary conditions
\begin{equation*}
(-\Delta+V)P\psi_{av | \partial D} = \psi_{av | \partial D} = \phi_{| \partial D} = 0,
\end{equation*}
\\
we have 
\begin{align*}
& 2 \gamma \text{Re} \Big[ \lag -i (-\Delta +V)^2 P\psiav , (-\Delta+V)P\psiav \rag \Big] \\
&- 2 \text{Re} \Big[ \lag (i\Delta -iV)\psiav,\phi \rag  \lag\phi ,\psiav\rag \Big]  \\
&= 2\gamma \text{Re} \Big[ \lag -i \nabla (-\Delta+V) P\psiav, \nabla(-\Delta +V) P\psiav\rag \Big]  \\
&+ 2\gamma \text{Re} \Big[ \lag -i V(-\Delta + V)P\psiav, (-\Delta + V) P\psiav \rag \Big] \\
&+ 2\lambda_1 \text{Re}  \Big[ \lag i\psiav ,\phi \rag \lag\phi , \psiav\rag \Big] \\
&= 0.
\end{align*}
\\
This leads to
\begin{equation}
\label{lyapunov}
\frac{\mathrm d}{\mathrm dt} \mathcal{L}(\psiav(t)) = 2 \alpha I_1(\psiav(t)) + 2 \left(\alpha^2 + \frac{1}{2} \beta^2 \right) I_2(\psiav(t)),
\end{equation}
where  $I_j$ is defined in (\ref{defI}).

\noindent
In order to have a decreasing Lyapunov function we define the feedback laws $\alpha$ and $\beta$ as in (\ref{feedback}).
Thus (\ref{lyapunov}) becomes 
\begin{equation}
\label{lyapunov2}
\frac{\mathrm d}{\mathrm dt} \mathcal{L} (\psiav (t)) = -2 \left( kI_1^2(1-kI_2) - \frac{1}{2} I_2g^2(I_2) \right).
\end{equation}

If we assume that we can choose the constant $k$ such that $(1-kI_2) > 0$ for all $t\geq 0$ and if $ \Delta \psiav(t) \in  H^1_0 \cap H^2$ then the feedbacks (\ref{feedback}) in system (\ref{eqaverage2}) lead to
\begin{equation}
\label{lyapunov3}
\frac{\mathrm d}{\mathrm dt} \mathcal{L}(\psiav (t)) \leq 0 , \quad \forall t \geq 0 .
\end{equation}

\paragraph*{Well-posedness and boundedness proofs.}

Using the previous heuristic on the Lyapunov function, we can state and prove the well-posedness of the closed loop system (\ref{eqaverage2})-(\ref{feedback}) globally in time and derive a uniform bound on the $H^2$ norm of the solution. Namely, we prove Proposition \ref{prop_reg}.

\begin{proof}[Proof of Proposition \ref{prop_reg}.]

By the explicit expression (\ref{defI}) of $I_2$, we get for any $z \in H^2$, $|I_2(z)| \leq f \big( ||z||_{H^2} \big)$ where 
\begin{equation*}
f(x) := ||Q_2||_{L^{\infty}} + \gamma ( x + ||V||_{L^{\infty}} + \lambda_1) 
(||Q_2||_{\mathcal{C}^2} x + 
 ||V||_{L^{\infty}} ||Q_2||_{L^{\infty}} + \lambda_1 ||Q_2||_{L^{\infty}}) .
\end{equation*}
Notice that $f$ is increasing on $\mathbb{R}^+$.
Let $K:= 2f \Big( \sqrt{\tilde{C} (1+R)} \Big)$ where $\tilde{C}$ is defined by (\ref{borneh2}), $k_0:=\frac{1}{K}$ and $k \in \left( 0 , k_0 \right)$.

The local existence and regularity is obtained by a classical fixed point argument : there exists $T^*>0$ such that the closed loop system (\ref{eqaverage2}) with initial condition (\ref{CI_psiav}) and feedback laws (\ref{feedback}) admits a unique solution defined on $(0,T^*)$ and satisfying either $T^*= + \infty$ or $T^* < + \infty$ and 
\begin{equation*}
\underset{t \to T^*}{\limsup} || \psiav(t) ||_{H^2} = + \infty .
\end{equation*}

We have
\begin{equation*}
|I_2(\psiav(0))| \leq f(||\psi^0||_{H^2}) \leq f \left( \sqrt{\tilde{C}(1+ \LL(\psi^0))} \right) \leq \frac{K}{2},
\end{equation*}
thus, by continuity, $|I_2(\psiav(t))| \leq K$ for $t$ small enough.

\noindent
Let
\begin{equation*}
T_{max} := \sup \left\{ t \in (0,T^*) ; |I_2(\psiav(\tau))| \leq K, \forall \tau \in (0,t) \right\}.
\end{equation*}
We want to prove that $T_{max}=T^*=+\infty$.

\noindent
For all $t \in [0, T_{max})$, we have  $\big( 1-kI_2(\psiav(t)) \big) > 0$, which implies (by (\ref{lyapunov2})), $\mathcal{L}(\psiav(\cdot))$ is decreasing on $[0,T_{max})$. 
Estimate (\ref{borneh2}) leads to 
\begin{equation}
\label{maj_temp_psiav}
||\psiav(t)||_{H^2} \leq \sqrt{\tilde{C} (1+\mathcal{L}(\psiav(t))} \leq \sqrt{\tilde{C}(1+\mathcal{L}(\psi^0))}, \quad \forall t \in [0,T_{max}).
\end{equation}
Let us proceed by contradiction and assume that $T_{max} < T^*$. This implies $|I_2(\psiav(T_{max}))|=K$. By definition of $K$,
\begin{equation*}
|I_2(\psiav(t))| \leq f \left( \sqrt{ \tilde{C} (1+ \mathcal{L}(\psi^0) } \right) \leq \frac{K}{2} \quad \forall t \in [0,T_{max}) .
\end{equation*}
This is inconsistent with $|I_2(\psiav(T_{max}))|=K$ so $T_{max}=T^*$ and the solution is bounded in $H^2$ when it is defined. As no blow-up is possible thanks to (\ref{maj_temp_psiav}) we obtain that $T_{max}=T^*=+\infty$ and thus the solution is global in time and bounded.
\\
Finally, taking the time derivative of the equation we obtain the announced regularity.
\end{proof}

\subsection{Convergence Analysis}

In all this section we assume that $k \in (0,k_0)$ where $k_0$ is defined in Proposition \ref{prop_reg} with $R=1$.
The closed-loop stabilization for the averaged system (\ref{eqaverage2}) is given by the next statement.

\begin{theo}
\label{convergence}
Assume that Hypotheses \ref{hypo} hold. If $\psi^0 \in X_0$ with $0< \LL(\psi^0) < 1$,  then the solution $\psiav$ of the closed-loop system (\ref{eqaverage2})-(\ref{feedback}) with initial condition (\ref{CI_psiav}) satisfies
\begin{equation*}
\psiav(t) \underset{t \to \infty}{\rightharpoonup} \CC
\quad \text{ in } H^2 .
\end{equation*}
\end{theo}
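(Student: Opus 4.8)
The plan is to run a LaSalle-type argument adapted to the weak $H^2$ topology, with $\LL$ as a (non-strict) Lyapunov function. First, by the feedback choice and the computation leading to (\ref{lyapunov2}), together with $1-kI_2(\psiav(t))>0$ guaranteed by Proposition \ref{prop_reg}, the map $t\mapsto\LL(\psiav(t))$ is non-increasing; being bounded below by $0$ it converges to some $\LL_\infty\geq0$, and by monotonicity $\LL_\infty\leq\LL(\psi^0)<1$. Integrating (\ref{lyapunov2}) over $[0,\infty)$ yields $\int_0^\infty\big(kI_1^2(1-kI_2)-\frac{1}{2} I_2 g^2(I_2)\big)\,\md t<\infty$, an integral of a non-negative quantity. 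Since (\ref{maj_psiav}) bounds $\psiav$ in $H^2$, the trajectory is weakly precompact in $H^2$, and by the compact embedding $H^2\hookrightarrow L^2$ every weak-$H^2$ limit point $\theta$ of $(\psiav(t))_{t\to\infty}$ satisfies $\|\theta\|_{L^2}=1$, i.e. $\theta\in S$. It then suffices to show that every such $\theta$ belongs to $\CC$.

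Next I would identify the dynamics of the limit points. The extra regularity encoded in $X_0$ (namely $\Delta\psi^0\in H^1_0\cap H^2$) propagates by Proposition \ref{prop_reg} and gives a uniform bound on $\partial_t\psiav$ strong enough to make the integrand in the dissipation integral uniformly continuous in time; Barbalat's lemma then forces $I_1(\psiav(t))\to0$ and $g(I_2(\psiav(t)))\to0$ as $t\to\infty$, i.e. the feedbacks $\alpha(\psiav(t))$ and $\beta(\psiav(t))$ vanish asymptotically. Writing the shifted trajectories through Duhamel's formula and using that $e^{tA}$ is unitary (hence weakly continuous on $H^2$) while the nonlinear term tends to $0$, I get that for each $s\geq0$, $\psiav(t_n+s)\rightharpoonup\Theta(s)$ weakly in $H^2$, where $\Theta(s)=\sum_{k}c_k e^{-i\lambda_k s}\phi_k$ is the free evolution of $\theta$ ($c_k:=\lag\theta,\phi_k\rag$). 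The delicate point, and the main obstacle, is precisely here: the functionals $I_1,I_2$ are \emph{not} continuous for the weak $H^2$ topology, because their leading terms $\lag(-\Delta+V)PQ_j z,(-\Delta+V)Pz\rag$ are products of sequences that converge only weakly in $L^2$. This is the infinite-dimensional lack of compactness alluded to in the introduction. I would resolve it, following \cite{BeauchardNersesyan}, by combining the asymptotic vanishing of the feedbacks with the compactness of the lower-order embeddings to pass to the limit in the linear dynamics, the aim being to transfer the asymptotic relations into the identities $I_1(\Theta(s))=0$ and $I_2(\Theta(s))\geq0$ for all $s\geq0$ along the free trajectory.

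Granting these identities, the conclusion follows from spectral analysis of the almost-periodic functions $s\mapsto I_j(\Theta(s))$. Hypothesis \ref{hypo} \emph{iii)} forces all the eigenvalues to be simple and all the differences $\lambda_1-\lambda_k$ ($k\geq2$) to be distinct and non-resonant; in particular the time-mean of $I_j(\Theta(\cdot))$ comes only from the diagonal terms $p=q$, which are real and contribute nothing to an imaginary part, so $I_j(\Theta(\cdot))$ has zero mean. Since $I_2(\Theta(\cdot))\geq0$ is almost periodic with zero mean, it vanishes identically; thus both $I_1(\Theta(\cdot))\equiv0$ and $I_2(\Theta(\cdot))\equiv0$. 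Expanding each $I_j(\Theta(s))$ in the frequencies $\lambda_p-\lambda_q$ and isolating, by non-resonance, the coefficient of the frequency $\lambda_1-\lambda_k$, I obtain for every $k\geq2$ a relation of the form $(\gamma\lambda_k^2+1)\,\lag Q_j\phi,\phi_k\rag\,c_1\overline{c_k}=0$. As $\gamma\lambda_k^2+1>0$, this gives $\lag Q_j\phi,\phi_k\rag\,c_1\overline{c_k}=0$. Using $I_1$ for $k\in J_{\neq0}$ and $I_2$ together with Hypothesis \ref{hypo} \emph{i)} for $k\in J_0$ (finitely many by \emph{ii)}), I conclude $c_1\overline{c_k}=0$ for every $k\geq2$.

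Finally I would close with the dichotomy provided by the semi-global hypothesis. If $c_1=0$ then $\lag\theta,\phi\rag=0$ and $\LL(\theta)=\gamma\|(-\Delta+V)P\theta\|_{L^2}^2+1\geq1$; but the term $1-|\lag\psi,\phi\rag|^2$ is weakly continuous (strongly $L^2$-continuous by compactness) while $\|(-\Delta+V)P\cdot\|_{L^2}^2$ is weakly lower semicontinuous, so $\LL(\theta)\leq\liminf_n\LL(\psiav(t_n))=\LL_\infty<1$, a contradiction. Hence $c_1\neq0$, so $c_k=0$ for all $k\geq2$ and, since $\theta\in S$, $\theta=c_1\phi$ with $|c_1|=1$, i.e. $\theta\in\CC$. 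As every weak-$H^2$ limit point of the bounded trajectory lies in the weakly compact set $\CC$, a standard subsequence argument upgrades this to $\psiav(t)\rightharpoonup\CC$ in $H^2$, which is the claim. I expect the genuine work to be concentrated in the weak invariance step of the second paragraph, making rigorous the passage to the limit in $I_1,I_2$ despite their failure to be weakly continuous, whereas the monotonicity, the frequency bookkeeping, and the final dichotomy are comparatively routine.
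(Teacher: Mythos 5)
Your overall architecture---monotone decay of $\LL$, weak $H^2$ precompactness of the trajectory, identification of weak limit points with free trajectories, spectral bookkeeping under Hypotheses \ref{hypo}, and the final dichotomy via $\LL(\psi_\infty)\leq\LL(\psi^0)<1$ to rule out $c_1=0$---is the paper's, and your last two paragraphs reproduce in substance Proposition \ref{ens_invariant}. But the proof is incomplete exactly where you yourself locate ``the genuine work'', and your diagnosis there is wrong. You assert that $I_1,I_2$ are \emph{not} continuous for the weak $H^2$ topology and leave the transfer of the relations $I_1=0$, $I_2\geq 0$ to the limit trajectory as an unproven ``aim''. In fact the paper's resolution (Proposition \ref{continuite_c.i.}, invoking \cite[Proposition 2.2]{BeauchardNersesyan}) is that these functionals \emph{are} sequentially continuous along weakly $H^2$-convergent sequences: in $\Im \lag (-\Delta+V)P(Q_j z),(-\Delta+V)Pz\rag$ the top-order contribution $\Im\lag Q_j\Delta z,\Delta z\rag$ vanishes because $Q_j$ is real, so after integration by parts the functional only requires $H^{3/2}$ regularity, and weak $H^2$ convergence gives strong $H^{3/2}$ convergence by compact embedding, whence $I_j(z_n)\to I_j(z_\infty)$. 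Without this cancellation observation (or a substitute for it) your second paragraph supplies no mechanism whatsoever to evaluate $I_1,I_2$ at the weak limit, and the argument stops there.

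Your Barbalat step is a second genuine gap. To apply Barbalat's lemma you need $t\mapsto I_j(\psiav(t))$ uniformly continuous on $[0,\infty)$, hence a uniform-in-time bound on $\md I_j(\psiav(t)).\partial_t\psiav(t)$; even after the cancellation above this requires, roughly, $\sup_{t\geq 0}\|\psiav(t)\|_{H^3}<\infty$ (equivalently a uniform $H^1$ bound on $\partial_t\psiav$). Proposition \ref{prop_reg} gives only $\Delta\psiav\in C^0([0,+\infty),H^1_0\cap H^2)$---continuity, not a uniform bound---and the Lyapunov function controls only the $H^2$ norm; the bounds of $M_4$ type in the paper hold only on compact time intervals. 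The paper avoids Barbalat entirely: by (\ref{lyapunov2}) and (\ref{feedback}), $t\mapsto\alpha(\mathcal{U}_t(\psi^0))$ belongs to $L^2(0,+\infty)$ and $t\mapsto I_2(\mathcal{U}_t(\psi^0))\,g^2\big(I_2(\mathcal{U}_t(\psi^0))\big)$ to $L^1(0,+\infty)$, so the time-shifted functions tend to $0$ in norm; the partial converse of the dominated convergence theorem gives almost-everywhere convergence to $0$ along a subsequence, and the weak continuity of the propagator and of the feedbacks (Proposition \ref{continuite_c.i.}) identifies these a.e.\ limits with $\alpha(\mathcal{U}_t(\psi_\infty))$ and the corresponding quantity for $\beta$; continuity in $t$ then upgrades this to all $t\geq 0$, so $\LL(\mathcal{U}_t(\psi_\infty))$ is constant and Proposition \ref{ens_invariant} concludes. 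You should replace your Barbalat paragraph by this integrability-plus-subsequence argument (or prove the missing uniform higher-order bound, which is not available in the paper); the remaining steps of your proposal then go through essentially as in the paper.
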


We prove this theorem by adapting the LaSalle invariance principle to infinite dimension in the same spirit as in \cite{BeauchardNersesyan}. This is done in two steps. First we prove that the invariant set, relatively to the closed-loop system (\ref{eqaverage2})-(\ref{feedback}) and the Lyapunov function $\LL$, is $\CC$. Here, Hypothesis \ref{hypo} is crucial. Then we prove that every adherent point for the weak $H^2$ topology of the solution of this closed-loop system is contained in $\CC$. This is due to the continuity of the propagator of the closed-loop system for the weak $H^2$ topology.

\subsubsection{Invariant set}
\begin{prop}
\label{ens_invariant}
Assume that Hypotheses \ref{hypo} hold. Assume that $\psi^0$ belongs to $S\cap H^1_0 \cap H^2$ and satisfies $\lag \psi^0, \phi \rag \neq 0$. If the function $t \mapsto \mathcal{L} (\psiav(t))$ is constant, then $\psi^0 \in \mathcal{C}$.
\end{prop}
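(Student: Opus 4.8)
The plan is to use the dissipation identity (\ref{lyapunov2}) to show that the feedbacks switch off along the trajectory, so that $\psiav$ reduces to a free evolution, and then to read off spectral information from the forced vanishing of $I_1$ and $I_2$, the non-resonance Hypothesis \ref{hypo} \textit{iii)} being the key tool for isolating individual Fourier modes.

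\emph{Step 1: the feedbacks vanish.} Since $t\mapsto\LL(\psiav(t))$ is constant, its derivative is zero, so by (\ref{lyapunov2})
\[
kI_1^2(\psiav(t))\big(1-kI_2(\psiav(t))\big)=\tfrac12\,I_2(\psiav(t))\,g^2\big(I_2(\psiav(t))\big).
\]
For $k\in(0,k_0)$, Proposition \ref{prop_reg} gives $1-kI_2>0$, so the left-hand side is $\geq0$, while (\ref{defg}) ($g(x)=0$ iff $x\geq0$) forces the right-hand side to be $\leq0$. Hence both sides vanish for every $t$: we get $I_1(\psiav(t))=0$ and $I_2(\psiav(t))g^2(I_2(\psiav(t)))=0$, the latter imposing $I_2(\psiav(t))\geq0$ and therefore $g(I_2(\psiav(t)))=0$. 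By (\ref{feedback}) this means $\alpha(\psiav(t))=\beta(\psiav(t))=0$ identically. (One technical point: differentiating $\LL$ along the flow uses the extra regularity from Proposition \ref{prop_reg}; for a merely $H^2$ datum one works instead with the integrated dissipation identity, whose nonpositive continuous integrand must vanish when $\LL$ is constant.)

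\emph{Step 2: free evolution, and promotion of $I_2\geq0$ to $I_2\equiv0$.} With $\alpha=\beta=0$, system (\ref{eqaverage2}) reduces to $i\partial_t\psiav=(-\Delta+V)\psiav$, so $\psiav(t)=\sum_k c_k e^{-i\lambda_k t}\phi_k$ with $c_k:=\lag\psi^0,\phi_k\rag$; note $c_1\neq0$ by hypothesis. Substituting this expansion into (\ref{defI}), each $I_j(\psiav(t))$ becomes a Bohr almost-periodic function of $t$, i.e. a convergent sum of exponentials $e^{-i(\lambda_a-\lambda_b)t}$. Computing its mean value keeps only the diagonal modes $\lambda_a=\lambda_b$; since Hypothesis \ref{hypo} \textit{iii)} forces the spectrum to be simple (taking $p=1$ there rules out $\lambda_k=\lambda_{k'}$) and $Q_2$ is self-adjoint, this constant mode is real, so $I_2(\psiav(\cdot))$ has zero mean. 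A nonnegative, continuous almost-periodic function of zero mean is identically zero, hence $I_2(\psiav(t))=0$ for all $t$, and we already have $I_1(\psiav(t))=0$.

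\emph{Step 3: extracting $c_k=0$.} For fixed $k\geq2$ and $j\in\{1,2\}$, I isolate the coefficient of $e^{-i(\lambda_1-\lambda_k)t}$ in $I_j(\psiav(t))$ by averaging against $e^{i(\lambda_1-\lambda_k)t}$. By Hypothesis \ref{hypo} \textit{iii)} only two terms survive in this mode, and writing $q:=\lag Q_j\phi_k,\phi\rag$ and using $\lag Q_j\phi,\phi_k\rag=\overline{q}$ (self-adjointness), the coefficient equals $\tfrac{1}{2i}\,c_1\overline{c_k}\big(\gamma\lambda_k^2\overline{q}+q\big)$. The prefactor $\gamma\lambda_k^2\overline{q}+q$ cannot vanish when $q\neq0$ (since $\gamma\lambda_k^2>0$), so $I_j\equiv0$ yields $c_1\overline{c_k}\,q=0$. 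As $c_1\neq0$: for $k\in J_{\neq0}$ use $j=1$, where $q=\lag Q_1\phi,\phi_k\rag\neq0$; for $k\in J_0$ use $j=2$, where Hypothesis \ref{hypo} \textit{i)} gives $q=\lag Q_2\phi,\phi_k\rag\neq0$. In either case $c_k=0$. Thus $c_k=0$ for all $k\geq2$, so $\psi^0=c_1\phi$ with $|c_1|=\|\psi^0\|_{L^2}=1$, i.e. $\psi^0\in\CC$.

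I expect the main obstacle to be Steps 2--3: rigorously justifying the Fourier-mode isolation (this is exactly where Hypothesis \ref{hypo} \textit{iii)} is indispensable, both to guarantee simplicity of the spectrum and to suppress all but two terms at the frequency $\lambda_1-\lambda_k$), together with the somewhat delicate upgrade of the one-sided information $I_2\geq0$ into the equality $I_2\equiv0$ via the zero-mean argument, which is what makes the polarizability coupling in Hypothesis \ref{hypo} \textit{i)} usable.
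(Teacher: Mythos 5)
Your proposal is correct in substance and shares the paper's skeleton (your Step 1 is exactly the paper's: from (\ref{lyapunov2}), $1-kI_2>0$ and (\ref{defg}) one gets $I_1\equiv 0$, $I_2\geq 0$, hence $\alpha\equiv\beta\equiv 0$ and free evolution), but your handling of $I_2$ takes a genuinely different route. The paper argues in two stages: it first exploits $I_1\equiv 0$ and an almost-periodicity lemma \cite[Lemma 3.10]{Nersesyan} to kill the modes in $J_{\neq 0}$; only then, using Hypothesis \ref{hypo} \emph{ii)} (finiteness of $J_0$), does $I_2(\psiav(t))$ become a \emph{finite} trigonometric sum, to which the positivity argument of \cite{CGLT} is applied ($\Im\big(\sum_{j\geq 1}\Lambda_j e^{i\mu_j t}\big)\geq 0$ with distinct nonzero frequencies forces $\Lambda_j=0$). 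You instead upgrade $I_2\geq 0$ to $I_2\equiv 0$ at the outset, applying the zero-mean argument to the full infinite expansion. This is legitimate: $t\mapsto\psiav(t)$ is Bohr almost periodic with values in $H^2$ (the tail $\sum_{k>N}(1+\lambda_k^2)|c_k|^2$ of the eigenexpansion is small uniformly in $t$), $I_2$ is uniformly continuous on $H^2$-bounded sets, so $I_2(\psiav(\cdot))$ is almost periodic, its mean is the limit of the means of the truncations, and these are imaginary parts of real diagonal sums, hence zero; a nonnegative continuous almost-periodic function with zero mean vanishes. What this buys you is a symmetric treatment of $I_1$ and $I_2$ by coefficient extraction, and, notably, you never use Hypothesis \ref{hypo} \emph{ii)}: the finiteness of $J_0$ is precisely what the paper needs to invoke the trigonometric-polynomial lemma of \cite{CGLT}, and your nonnegative-almost-periodic fact is its infinite-frequency generalization. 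Your observation that Hypothesis \ref{hypo} \emph{iii)} forces simplicity of the whole spectrum (take $p=q$, then $p=1$) is correct and is indeed needed to identify the zero-frequency mode with the diagonal $a=b$.

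One computational slip in Step 3, harmless but worth fixing. At the frequency $\lambda_k-\lambda_1$, the bracket in (\ref{defI}) contributes $\gamma\lambda_k^2\,c_1\overline{c_k}\,\overline{q}$ (from the $P$-term with the pair $(a,b)=(1,k)$; the pair $(k,1)$ is absent since $P\phi_1=0$) and $-c_k\overline{c_1}\,q$ at the \emph{opposite} frequency (from the rank-one term with $a=k$), so after taking imaginary parts the coefficient of $e^{i(\lambda_k-\lambda_1)t}$ is
\begin{equation*}
\frac{1}{2i}\,c_1\overline{c_k}\,\bigl(1+\gamma\lambda_k^2\bigr)\,\overline{q},
\end{equation*}
not $\frac{1}{2i}c_1\overline{c_k}\bigl(\gamma\lambda_k^2\overline{q}+q\bigr)$; this matches the factor $(1+\gamma\lambda_j^2)$ appearing in the paper's expansion of $I_1$. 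The two expressions coincide when the $\phi_k$ are chosen real-valued (so $q\in\R$), and the corrected prefactor is manifestly nonzero whenever $q\neq 0$, so your conclusion $c_k=0$ stands; note, however, that your stated prefactor \emph{could} vanish for purely imaginary $q$ with $\gamma\lambda_k^2=1$, so the justification ``since $\gamma\lambda_k^2>0$'' would not suffice as written.
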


\begin{proof}
Thanks to (\ref{lyapunov2}), the fact that $\big( 1-kI_2(\psiav(t)) \big)> 0$ for all $t \geq 0$ and (\ref{defg}) we get 
\begin{equation*}
I_1[\psiav(\cdot)] \equiv 0 , \quad I_2(\psiav(\cdot))g^2 \left( I_2(\psiav(\cdot)) \right) \equiv 0 \, \text{ i.e. } 
I_2(\psiav(t)) \geq 0 , \quad \forall t \geq 0.
\end{equation*}
By (\ref{feedback}) this implies that $\alpha(\psiav(\cdot)) \equiv \beta(\psiav(\cdot)) \equiv 0$ and then $\psiav$ is solution of the uncontrolled Schr\"odinger equation. So,
\begin{equation*}
\psiav(t)= \sum_{j=1}^{\infty} e^{-i\lambda_j t} \lag \psi^0 , \phi_j\rag \phi_j .
\end{equation*}
Recall that $\phi := \phi_1$ is the ground state. Following the idea of \cite{Nersesyan}, we obtain after computations and gathering the terms with different exponential term 
\begin{align*}
I_1(\psiav(t)) & =
\sum_{j , k \geq 2} \tilde{P}(\psi^0,j,k,Q_1) e^{-i(\lambda_j - \lambda_k)t} 
+  \sum_{j \in J_{\neq 0}} \tilde{\tilde{P}}(\psi^0,j,Q_1) e^{i(\lambda_j - \lambda_1)t} \\
& + \sum_{j \in J_{\neq 0}} \lag\psi^0,\phi_j\rag \lag\phi,\psi^0\rag  \lag Q_1\phi_j,\phi\rag ( 1+ \gamma \lambda_j^2) e^{-i(\lambda_j - \lambda_1)t} ,
\end{align*}
\\
where $\tilde{P}(\psi^0,j,k,Q_1)$ and $\tilde{\tilde{P}}(\psi^0,j,Q_1)$ are constants. Then, by \cite[Lemma 3.10]{Nersesyan},
\begin{equation*}
\lag\psi^0,\phi_j\rag \lag\phi,\psi^0\rag \lag Q_1\phi_j,\phi\rag ( 1+ \gamma \lambda_j^2) = 0 , \quad \forall j\in J_{\neq 0}. 
\end{equation*}
Using the assumption $\lag \phi , \psi^0 \rag \neq 0$ and Hypotheses \ref{hypo} it comes that for all $j \in J_{\neq 0}$, $\lag \psi^0,\phi_j \rag =0$. This leads to 
\begin{equation*}
\psiav(t)= e^{-i\lambda_1 t} \lag \psi^0 , \phi\rag \phi + \sum_{j\in J_0} e^{-i\lambda_j t} \lag \psi^0 , \phi_j\rag \phi_j ,
\end{equation*}
where by Hypotheses \ref{hypo}, $J_0$ is a finite set. By simple computations we obtain,
\begin{align*}
I_2(\psiav(t)) &= \text{Im} \Big( 
\sum\limits_{k,j \in J_0} \gamma \lambda_j \lag\phi_j , \psi^0\rag \lag\psi^0 , \phi_k\rag \lag(-\Delta+V) P (Q_2 \phi_k), \phi_j\rag e^{i(\lambda_j - \lambda_k)t} \\
&+ \sum\limits_{j \in J_0} \gamma \lambda_j \lag\phi_j , \psi^0\rag \lag\psi^0 , \phi\rag \lag(-\Delta+V) P (Q_2 \phi), \phi_j\rag e^{i(\lambda_j - \lambda_1)t} \\
&- \sum\limits_{j\in J_0} \lag\psi^0 , \phi_j\rag \lag\phi , \psi^0\rag \lag Q_2 \phi_j , \phi\rag e^{-i(\lambda_j - \lambda_1)t} \\
&- | \lag\psi^0 , \phi\rag |^2 \lag Q_2\phi , \phi\rag \Big) \geq 0 .
\tag{\theequation} \addtocounter{equation}{1} \label{defLambda}
\end{align*}
There exists $N_0 \in \N^*$ and $( \mu_n)_{n \in \{ 0 , \dots , N_0 \} }$ such that
\begin{equation*}
\left\{ \mu_n \, ; \, n \in \{ 0 , \dots , N_0 \} \right\} =
\left\{ \pm (\lambda_k - \lambda_j) \, ; \, (k,j) \in J_0 \times (J_0 \cup \{1\}) \right\},
\end{equation*}
with $\mu_0 = 0$ and $\mu_j \neq \mu_k$ if $j \neq k$. Thus, (\ref{defLambda}) implies that for any $n \in \{ 0 , \dots , N_0 \} $, there exists $\Lambda_n = \Lambda_n(\psi^0,Q_2) \in \C$ such that
\begin{equation}
\label{I2}
\text{Im} \Big( \sum\limits_{j=0}^{N_0} \Lambda_j e^{i \mu_j t} \Big) \geq 0, \quad \forall t\geq 0.
\end{equation}
Straightforward computations give
\begin{equation*}
\Lambda_0 = \sum_{j \in J_0} \Big( \gamma \lambda_j^2  |\lag\phi_j , \psi^0\rag|^2  \lag Q_2 \phi_j , \phi_j\rag  \Big)
- |\lag\psi^0,\phi\rag|^2 \lag Q_2\phi, \phi\rag .
\end{equation*}
\\
Thus, $\text{Im} \big( \Lambda_0  \big)=0 $ and our inequality (\ref{I2}) can be rewritten as
\begin{equation*}
\text{Im} \Big( \sum\limits_{j=1}^{N_0} \Lambda_j e^{i \mu_j t} \Big) \geq 0, \quad \forall t\geq 0,
\end{equation*}
with the $\mu_j$ being all different and non-zero. Then using the same argument as in \cite[Proof of Theorem 3.1]{CGLT}, we get that $\Lambda_j = 0$ for $j\geq 1$ and then using (\ref{defLambda}) in particular that the coefficient of $e^{-i(\lambda_j-\lambda_1)t}$ vanishes. It implies $\lag \psi^0 , \phi_j\rag = 0$ for all $j \in J_0$.
Consequently, $ \psi^0 = \lag\psi^0,\phi\rag \phi$. As $\psi^0, \phi \in S$, we obtain $\psi^0 \in \mathcal{C}$.
\end{proof}

\subsubsection{Weak $H^2$ continuity of the propagator}

$\indent$ We denote by $\mathcal{U}_t(\psi^0)$ the propagator of the closed-loop system (\ref{eqaverage2})-(\ref{feedback}). We detail here the continuity property of this propagator and of the feedback laws we need to apply the LaSalle invariance principle.

\begin{prop}
\label{continuite_c.i.}
Let $z_n \in S \cap H^1_0 \cap H^2$ be a sequence such that $ z_n \rightharpoonup z_{\infty}$ in $H^2$.
For every $T>0$, there exists $N \subset (0,T)$ of zero Lebesgue measure verifying for all $t \in (0,T) \backslash N$,
\begin{enumerate}[i)]
\item $\displaystyle{\mathcal{U}_t(z_n) \underset{n \to \infty}{\rightharpoonup} \mathcal{U}_t(z_{\infty})}$ in $H^2$,
\item $\displaystyle{\alpha \big( \mathcal{U}_t(z_n) \big) \underset{n \to \infty}{\rightarrow} \alpha \big( \mathcal{U}_t(z_{\infty}) \big)} $ and $ \displaystyle{\beta \big( \mathcal{U}_t(z_n) \big) \underset{n \to \infty}{\rightarrow} \beta \big( \mathcal{U}_t(z_{\infty}) \big)}$.
\end{enumerate}
\end{prop}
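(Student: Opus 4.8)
The plan is to prove the two continuity assertions by combining a uniform-in-$n$ a priori bound with a compactness argument that upgrades weak convergence of the data to strong convergence of the trajectories in a weaker norm, which is exactly what is needed to pass to the limit in the nonlinear feedback terms. First I would observe that since $z_n \rightharpoonup z_\infty$ in $H^2$, the sequence $(z_n)$ is bounded in $H^2$; moreover weak limits preserve the unit $L^2$-norm only up to lower semicontinuity, but here we may assume (as is implicit) that the $z_n$ lie on the sphere $S$ and that $\LL(z_n)$ stays below a fixed $R$, so that Proposition~\ref{prop_reg} applies uniformly. This yields a bound $\|\UU_t(z_n)\|_{H^2} \leq M$ for all $n$ and all $t \geq 0$, with $M$ independent of $n$. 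The gain in regularity from $X_0$ (i.e. $\Delta z_n \in H^1_0 \cap H^2$) should likewise be controlled uniformly, giving equi-boundedness of $\partial_t \UU_t(z_n)$ in a suitable negative-order or $L^2$-type space.

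Next I would set $\psi_{av}^n(t) := \UU_t(z_n)$ and extract, via a diagonal/Banach-Alaoglu argument, a subsequence converging weak-$*$ in $L^\infty(0,T;H^2)$ with time-derivatives bounded in $L^\infty(0,T;L^2)$ (using the equation \eqref{eqaverage2} and the $H^2$ bound to estimate $\partial_t \psi_{av}^n$). The Aubin--Lions--Simon lemma then gives strong convergence of $\psi_{av}^n$ in $C^0([0,T];H^{2-\delta})$ for any $\delta>0$, hence in particular strong convergence in $H^1$ for a.e.\ $t$. The crux is then to identify the limit: I would show that the limit $\psi_{av}^\infty$ is a solution of the closed-loop system \eqref{eqaverage2}--\eqref{feedback} with initial datum $z_\infty$, so that by the uniqueness part of Proposition~\ref{prop_reg} it must equal $\UU_t(z_\infty)$. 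Passing to the limit in the linear term $A\psi_{av}^n$ is immediate by weak continuity; the nonlinear terms $\alpha(\psi_{av}^n)Q_1\psi_{av}^n$ etc.\ require the strong $H^1$ convergence together with continuity of the feedback maps $z \mapsto \alpha(z), \beta(z)$, which by \eqref{feedback}--\eqref{defI} are continuous on bounded sets of $H^2$ for the $H^{2-\delta}$ topology (the functionals $I_j$ involve products of $z$ with $Q_j$ differentiated twice, so one genuinely needs strong convergence near the top regularity). Since the whole sequence has a unique possible limit, convergence holds without passing to a subsequence, giving assertion~\emph{i)} for a.e.\ $t \in (0,T)$, with the exceptional null set $N$ coming from the ``a.e.\ $t$'' in the Aubin--Lions strong convergence.

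Assertion~\emph{ii)} would then follow directly: once $\UU_t(z_n) \to \UU_t(z_\infty)$ strongly in $H^{2-\delta}$ for $t \notin N$, the continuity of $\alpha = -kI_1$ and $\beta = g\circ I_2$ for this topology yields $\alpha(\UU_t(z_n)) \to \alpha(\UU_t(z_\infty))$ and likewise for $\beta$ (here $g \in C^2$ with bounded derivative, so it is Lipschitz, and $I_2$ is continuous on bounded $H^2$-sets for the weaker topology).

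The main obstacle I anticipate is establishing genuine \emph{strong} convergence of the trajectories at a regularity high enough to control the feedback functionals $I_j$, whose definition \eqref{defI} contains the second-order operator $(-\Delta+V)$ applied to both $Q_j z$ and $z$. A naive weak-$H^2$ passage to the limit fails precisely on these quadratic top-order terms, so the delicate point is to verify that the uniform bounds on $\partial_t \psi_{av}^n$ are strong enough to invoke an Aubin--Lions compactness that still sees the relevant derivatives—equivalently, to exploit the extra $\Delta z_n \in H^1_0 \cap H^2$ regularity from $X_0$ so that one effectively works one derivative above $H^2$ and obtains compactness in $H^2$ itself rather than merely $H^{2-\delta}$. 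Handling this gap cleanly, and tracking exactly where the null set $N$ is forced to appear, is where the real work lies.
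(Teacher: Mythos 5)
Your overall skeleton --- uniform $H^2$ bound via Proposition \ref{prop_reg}, extraction by compactness in less regular spaces, identification of the limit trajectory through uniqueness of the closed-loop system --- is precisely the route the paper takes (it defers to \cite[Proposition 2.2]{BeauchardNersesyan} for the details). However, the point you yourself flag as ``where the real work lies'' is a genuine gap in your argument, and the fix you propose does not work. The hypothesis of the proposition is only $z_n \in S \cap H^1_0 \cap H^2$: no $X_0$-regularity of the data is assumed, and even along trajectories issued from $X_0$, Proposition \ref{prop_reg} gives $\Delta \psiav \in C^0([0,+\infty), H^1_0 \cap H^2)$ but no uniform-in-$n$ (nor in $t$) bound on $\| \Delta\, \UU_t(z_n) \|_{H^2}$. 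So there is no way to ``work one derivative above $H^2$'' and upgrade the Aubin--Lions compactness to $H^2$ itself. With only the uniform $H^2$ bound and $\partial_t \UU_t(z_n)$ bounded in $L^{\infty}(0,T;L^2)$ (read off from the equation), you get compactness in $C^0([0,T];H^s)$ for $s<2$ only, and your passage to the limit in the nonlinear terms $\alpha(\psi^n_{av}) Q_1 \psi^n_{av}$, etc., then rests entirely on the asserted-but-unproved continuity of $I_1, I_2$ on bounded sets of $H^2$ for the $H^{2-\delta}$ topology --- an assertion you subsequently retract in your final paragraph.

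The missing idea is structural, not a matter of extra regularity: the top-order part of the quadratic form in (\ref{defI}) cancels inside the imaginary part. Writing $L = -\Delta+V$ and expanding $L(Q_j z) = Q_j L z - 2\nabla Q_j \cdot \nabla z - (\Delta Q_j) z$, the dangerous term is $\Im \lag Q_j\, LPz , LPz \rag = 0$ because $Q_j$ is real-valued (the projector $P$ only contributes smooth rank-one corrections); the surviving terms carry at most three derivatives in total and, after integration by parts, define a quadratic form bounded on $H^{3/2}$. This is exactly what the paper invokes when it says ``the regularity $H^{3/2}$ is sufficient to define the feedback'': $I_j$ is continuous for the strong $H^{3/2}$ topology on bounded sets of $H^2$, and since the embedding $H^2 \hookrightarrow H^{3/2}$ is compact, weak $H^2$ convergence already yields $I_j(z_n) \to I_j(z_\infty)$. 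This both gives \emph{ii)} directly from \emph{i)} and legitimizes the limit identification in your compactness step (any $s \in (3/2,2)$ in Aubin--Lions then suffices). A further small inaccuracy: attributing the null set $N$ to the ``a.e.\ $t$'' of Aubin--Lions is off, since Simon's theorem gives convergence in $C^0([0,T];H^s)$, i.e.\ for \emph{every} $t$; in the reference proof $N$ arises from the extraction arguments in time used to identify the limit. With the $H^{3/2}$-continuity lemma supplied, your plan closes; without it, the proof is incomplete at its central step.
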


\begin{proof}

\noindent \emph{Proof of ii).}
We start by proving that if $(z_n)_{n\in \N} \in H^1_0 \cap H^2$ satisfy 
\\
$\displaystyle{z_n \underset{n \to \infty}{\rightharpoonup} z_{\infty}}$ in $H^2$ then $\displaystyle{ \alpha( z_n ) \underset{n \to \infty}{\rightarrow} \alpha( z_{\infty} )} $ and $\displaystyle{\beta( z_n ) \underset{n \to \infty}{\rightarrow} \beta( z_{\infty} )}$. Thus $ii)$ will be a simple consequence of $i)$.
As proved in \cite[Proposition 2.2]{BeauchardNersesyan}, using the fact that the regularity $H^{3/2}$ is sufficient to define the feedback, we get
\begin{equation*}
I_j(z_n) \underset{ n \rightarrow + \infty}{\longrightarrow} I_j(z_{\infty}),\quad \text{for } j=1,2.
\end{equation*}
So by the design of our feedback,
\begin{equation*}
\alpha(z_n) \underset{ n \rightarrow + \infty}{\longrightarrow}\alpha (z_{\infty}) , \quad 
\beta(z_n) \underset{ n \rightarrow + \infty}{\longrightarrow} \beta (z_{\infty}).
\end{equation*}

\noindent \emph{Proof of i).}
The exact same proof as in \cite[Proposition 2.2]{BeauchardNersesyan} based on extraction in less regular spaces, uniqueness property of the closed loop system and taking into account the polarizability term leads to the announced result.
\end{proof}

\subsubsection{LaSalle invariance principle}

$\indent$ We now have all the needed tools to prove Theorem \ref{convergence}.

\begin{proof}[Proof of Theorem \ref{convergence}]
Consider $\psi^0 \in X_0$ with $0 < \mathcal{L}(\psi^0) <1$.
Thanks to the bound (\ref{borneh2}), $\mathcal{U}_{t}(\psi^0)$ is bounded in $H^2$. Let $(t_n)_{n \in \N}$ be a sequence of times tending to $+\infty$ and $\psi_{\infty} \in H^2$ be such that  
$\mathcal{U}_{t_n}(\psi^0) \underset{n \to \infty}{\rightharpoonup} \psi_{\infty}$ in $H^2$. We want to show that $\psi_{\infty} \in \mathcal{C}$.

We prove that $\alpha(\mathcal{U}_t(\psi_{\infty})) =0$ and $\beta(\mathcal{U}_t(\psi_{\infty}))=0$.
Indeed, the function $t\mapsto \alpha \big( \mathcal{U}_t(\psi^0) \big)$ belongs to $L^2(0,+\infty)$ (because of (\ref{lyapunov2}) and (\ref{feedback})) so the sequence of functions $(t\in(0,+\infty) \mapsto  \alpha \big( \mathcal{U}_{t_n+t}(\psi^0) \big)_n$ tends to zero in $L^2(0,+\infty)$. Then by the Lebesgue reciprocal theorem there exists a subsequence $(t_{n_k})_{k \in \N}$ and $N_1 \subset (0,+\infty)$ of zero Lebesgue measure such that
\begin{equation*}
\alpha \big( \mathcal{U}_{t+t_{n_k}} (\psi^0) \big) \underset{k \to \infty}{\rightarrow} 0 , \quad \forall t \in (0,+\infty) \backslash N_1.
\end{equation*}

Let $T \in (0, +\infty)$. Using Proposition \ref{continuite_c.i.}, there exists $N \subset (0,T)$  of zero Lebesgue measure such that
\begin{equation*}
\alpha \big( \mathcal{U}_{t+t_{n_k}} (\psi^0) \big) \underset{k \to \infty}{\rightarrow} \alpha \big( \mathcal{U}_t (\psi_{\infty}) \big) ,
\quad \forall t \in (0,T) \backslash N.
\end{equation*}
Hence, $\alpha \big( \mathcal{U}_t (\psi_{\infty}) \big) = 0$ for all $t\in (0,T) \backslash (N_1 \cup N)$. The function $t \mapsto \alpha \big( \mathcal{U}_{t} (\psi_{\infty}) \big)$ being continuous we get 
$\alpha \big( \mathcal{U}_t (\psi_{\infty}) \big) = 0$ for all $t\in [0,T]$, and this for all $ T>0$. Finally 
$\alpha \big( \mathcal{U}_t (\psi_{\infty}) \big) = 0$ for all $t \geq 0$.

The same argument holds for $\beta$ as $\tilde{g} : t\mapsto I_2 \big( \mathcal{U}_t(\psi^0) \big) g^2 \left( I_2 \big( \mathcal{U}_t(\psi^0) \big) \right)$ belongs to $L^1(0,+\infty)$. Then by the proof of Proposition \ref{continuite_c.i.},
\begin{equation*}
\tilde{g} \big( \mathcal{U}_{t+t_{n_k}} (\psi^0) \big) \underset{k \to \infty}{\rightarrow} \tilde{g} 
\big( \mathcal{U}_t (\psi_{\infty}) \big) ,
\quad \forall t \in (0,T) \backslash N,
\end{equation*}
and $\tilde{g} \big( \mathcal{U}_t (\psi_{\infty}) \big) =0$ implies 
$\beta \big( \mathcal{U}_t (\psi_{\infty}) \big) \equiv 0$.

These two results lead to the fact that $\mathcal{L} \big( \mathcal{U}_t (\psi_{\infty}) \big)$ is constant.
\\
By (\ref{lyapunov3}), $\mathcal{L}(\psi_{\infty}) \leq \mathcal{L}(\psi^0) < 1$ so $\lag\psi_{\infty},\phi\rag \neq 0$. All assumptions of Proposition \ref{ens_invariant} are satisfied then $\psi_{\infty} \in \mathcal{C}$.
\\
This concludes the proof of Theorem \ref{convergence} and the convergence analysis of (\ref{eqaverage2}).
\end{proof}

\section{Approximation by averaging}
\label{sect_appro}

$\indent$ The method of averaging was mostly used for finite-dimensional dynamical systems (see e.g. \cite{SandersVerhulst07}). The concept of averaging in quantum control theory has already produced interesting results. For example, in \cite{MirrahimiSarletteRouchon10} the authors make important use of these averaging properties in finite dimension through what is called in quantum physics the rotating wave approximation. The main idea of using a highly oscillating control is that if it is oscillating enough the initial system behaves like the averaged system. We extend this concept in our infinite dimensional framework : we prove an approximation result on every finite time interval.
More precisely we have the following result.
\begin{prop}
\label{avgprop}
Let $[s,L]$ be a fixed interval and $\psi^0 \in X_0$ with $0 < \LL(\psi^0) <1$. Let $\psiav$ be the solution of the closed loop system (\ref{eqaverage2}),(\ref{feedback}) with initial condition $\psiav(s,\cdot) = \psi^0$. For any $\delta>0$, there exists $\varepsilon_0 >0$ such that, if  $\psieps$ is the solution of (\ref{syst}) associated to the same initial condition $\psieps(s,\cdot) = \psi^0$ and control $u^{\varepsilon}(t)$ defined by (\ref{defu_eps}) with $\varepsilon \in (0,\varepsilon_0)$ then
\begin{equation*}
|| \psieps(t,\cdot) - \psiav(t,\cdot) ||_{H^2} \leq \delta , \quad \forall t \in [s,L].
\end{equation*}
\end{prop}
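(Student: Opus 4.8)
The plan is to exploit that, by construction, $u^{\varepsilon}$ is an \emph{open-loop} control precomputed from the averaged trajectory: writing $a(t):=\alpha(\psiav(t))$ and $b(t):=\beta(\psiav(t))$, both $\psieps$ and $\psiav$ solve \emph{linear} time-dependent Schr\"odinger equations with the \emph{same} slowly varying coefficients $a,b$, and (\ref{eqaverage2}) is exactly the average over one fast period $2\pi$ of (\ref{syst}) with $a,b$ frozen, since $\frac{1}{2\pi}\int_0^{2\pi}(a+b\sin\sigma)\,\md\sigma = a$ and $\frac{1}{2\pi}\int_0^{2\pi}(a+b\sin\sigma)^2\,\md\sigma = a^2+\frac12 b^2$. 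Let $A=i\Delta-iV$ generate the Schr\"odinger group $(e^{tA})_{t}$, which is unitary on $L^2$ and, being an isometry for the graph norm of $A$ (equivalent to the $H^2$ norm on $H^2\cap H^1_0$ by elliptic regularity), is uniformly bounded on $H^2\cap H^1_0$. Setting $r:=\psieps-\psiav$, one gets $r(s)=0$ and $\partial_t r = Ar + B(t/\varepsilon,t)r + H(t)$, where $B(\cdot,t)$ is multiplication by the smooth, uniformly bounded symbol $-i(a+b\sin)Q_1-i(a+b\sin)^2 Q_2$ (hence bounded on $H^2\cap H^1_0$, uniformly in $\varepsilon$), and $H$ is the zero-mean oscillating source
\[
H(\sigma) = -i\,b(\sigma)\sin(\tfrac{\sigma}{\varepsilon})\,Q_1\psiav(\sigma) -i\Big(2a(\sigma)b(\sigma)\sin(\tfrac{\sigma}{\varepsilon}) - \tfrac12 b(\sigma)^2\cos(\tfrac{2\sigma}{\varepsilon})\Big)Q_2\psiav(\sigma).
\]

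From the Duhamel formula $r(t)=\int_s^t e^{(t-\sigma)A}\big[B(\sigma/\varepsilon,\sigma)r(\sigma)+H(\sigma)\big]\,\md\sigma$, the $Br$ contribution is bounded by $C\int_s^t \|r(\sigma)\|_{H^2}\,\md\sigma$. The heart of the matter is to show that the oscillating integral $\int_s^t e^{(t-\sigma)A}H(\sigma)\,\md\sigma$ is $O(\varepsilon)$ in $H^2$, uniformly on $[s,L]$. For this I would integrate by parts once, using that the primitives of $\sin(\cdot/\varepsilon)$ and $\cos(2\cdot/\varepsilon)$ are $O(\varepsilon)$. Since $A$ commutes with its group, the derivative falls onto the slow factor as $\frac{\md}{\md\sigma}\big[e^{(t-\sigma)A}Q_j\psiav(\sigma)\big]=e^{(t-\sigma)A}\big[-A(Q_j\psiav(\sigma))+Q_j\partial_\sigma\psiav(\sigma)\big]$, leaving, besides $O(\varepsilon)$ boundary terms, an integral carrying the prefactor $O(\varepsilon)$.

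The main obstacle is precisely this integration by parts: controlling $A(Q_j\psiav)=i(\Delta-V)(Q_j\psiav)$ in $H^2$ costs two derivatives, hence requires $Q_j\psiav\in H^4$, i.e. $\psiav$ bounded in $H^4$ on $[s,L]$. This is exactly what the admissibility $\psi^0\in X_0$ buys, together with the regularity part of Proposition \ref{prop_reg} ($\Delta\psiav\in C^0([0,+\infty),H^1_0\cap H^2)$); moreover $\partial_\sigma\psiav=A\psiav+F^0(\psiav)\in H^2$ and, from the smoothness of the feedback laws (\ref{feedback})--(\ref{defg}) and the time regularity of $\psiav$, the coefficients $a,b$ are $C^1$ on $[s,L]$, so all factors differentiated in the integration by parts are bounded. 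This is the structural reason behind the gap, mentioned in the remark after Theorem \ref{theo_suite}, between the $H^4$ regularity of $\psi^0$ and the $H^s$ ($s<2$) conclusion, and the reason why a finite time horizon is essential (the constants depend on $\sup_{[s,L]}\|\psiav\|_{H^4}$, not controlled uniformly in time).

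Putting the two estimates together yields $\|r(t)\|_{H^2}\le C\varepsilon + C\int_s^t\|r(\sigma)\|_{H^2}\,\md\sigma$ for all $t\in[s,L]$, with $C=C(V,Q_1,Q_2,L,\psi^0)$ independent of $\varepsilon$; Gronwall's lemma then gives $\|r(t)\|_{H^2}\le C\varepsilon\,e^{C(L-s)}$, which is $\le\delta$ as soon as $\varepsilon<\varepsilon_0$ for a suitable $\varepsilon_0=\varepsilon_0(\delta)$. That $\psieps$ is well defined and stays $H^2$-bounded on $[s,L]$ uniformly in $\varepsilon$ follows from Proposition \ref{existence}, since $\|u^{\varepsilon}\|_{L^\infty(s,L)}\le \|a\|_{L^\infty}+\|b\|_{L^\infty}$ is bounded independently of $\varepsilon$, so all the manipulations above are justified.
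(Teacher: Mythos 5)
Your proposal is correct and follows essentially the same route as the paper: the identical Duhamel splitting into a term Lipschitz in $r=\psieps-\psiav$ (the paper's $\tilde{F}(\cdot,\psieps,\psiav)-F(\cdot,\psiav)$ is exactly your $B(t/\varepsilon,t)r$) plus a zero-mean oscillatory source, an $O(\varepsilon)$ bound on the latter, and Gr\"onwall, with the $X_0$ regularity invoked exactly where the paper needs it (to bound $A(Q_j\psiav)$ and $\partial_t\psiav$ in $H^2$ on $[s,L]$). Your explicit integration by parts against the antiderivatives of $\sin(\cdot/\varepsilon)$ and $\cos(2\cdot/\varepsilon)$ is precisely the content of the Hale--Lunel identity (Lemma \ref{lemme_AvgInf}) that the paper cites abstractly through $U$ and $H$, so the two arguments coincide up to notation, including the boundary terms, the $A(Q_j\psiav)$ term, and the $Q_j\partial_t\psiav$ term with the $C^1$ regularity of the feedback coefficients.
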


\begin{rmq} Notice that the controls $\alpha$ and $\beta$ were defined using the averaged system in a feedback form but the control $u^{\varepsilon}$ used for the system (\ref{syst}) is explicit and is not defined as a feedback control.
\end{rmq}

\begin{rmq} Due to the infinite dimensional framework, we are facing regularity issues and cannot adapt directly the strategy of \cite{SandersVerhulst07}.
\end{rmq}

\begin{proof}
We define for $(t,z,\tilde{z}) \in \R \times H^2 \times H^2$,
\begin{equation}
\label{defFopen}
\tilde{F}(t,z,\tilde{z}) := -i \left( \alpha(\tilde{z}) + \beta(\tilde{z}) \sin(t) \right) Q_1 z - i \left( \alpha(\tilde{z}) + \beta(\tilde{z}) \sin(t) \right)^2 Q_2 z .
\end{equation}
Notice that thanks to (\ref{defF}) for any $(t,z) \in \R \times H^2$,
\begin{equation}
\label{F_Ftilde}
\tilde{F}(t,z,z) = F(t,z).
\end{equation}
With these notations the considered system (\ref{syst}) with control (\ref{defu_eps}) and initial condition $\psieps(s,\cdot)= \psi^0$  can be rewritten as
\begin{equation*}
\left\{
\begin{aligned}
\partial_t \psieps(t) &= A \psieps(t) + \tilde{F} \left( \frac{t}{\varepsilon} , \psieps(t) , \psiav(t) \right) ,
\\
\psi_{\varepsilon_{| \partial D}} &= 0 ,
\end{aligned}
\right.
\end{equation*}
where $\psiav$ is the solution of the closed-loop system (\ref{eqaverage2}) with initial condition $\psiav(s,\cdot)=\psi^0$. 
\\
Denoting by $T_A$ the semigroup generated by $A$, we have for any $t \geq s$,
\begin{align*}
\psieps(t) &= T_A(t-s) \psi^0 + \int_s^t T_A(t-\tau) \tilde{F} \left( \frac{\tau}{\varepsilon} , \psieps(\tau) , \psiav(\tau) \right) \md \tau,  
\\
\psiav(t) &= T_A(t-s) \psi^0 + \int_s^t T_A(t-\tau) F^0 \big(  \psiav(\tau) \big) \md \tau.
\end{align*}
This implies for any $t \geq s$,
\begin{equation}
\label{demo_avg1}
\begin{split}
||  \psieps(t) - \psiav(t) ||_{H^2}  \leq 
 \Big| \Big| \int_s^t T_A(t-\tau)  \left[ F \left( \frac{\tau}{\varepsilon} , \psiav(\tau) \right) - F^0 \left(  \psiav(\tau) \right) \right] \md \tau \Big| \Big|_{H^2} 
\\
+  \Big| \Big| \int_s^t T_A(t-\tau) \Big[  \tilde{F} \left( \frac{\tau}{\varepsilon} , \psieps(\tau), \psiav(\tau) \right) 
- F \left( \frac{\tau}{\varepsilon} , \psiav(\tau) \right) \Big] \md \tau \Big| \Big|_{H^2} .
\end{split}
\end{equation}
\\
We study separately the two terms of the right-hand side of (\ref{demo_avg1}).

\textit{First step : }
We show the existence of $C>0$ such that for any $t \geq s$, for any $\varepsilon > 0$,
\begin{equation}
\label{avg_term1}
\begin{split}
\Big| \Big|  \int_s^t T_A(t-\tau) \left[ \tilde{F} \left( \frac{\tau}{\varepsilon} , \psieps(\tau) , \psiav(\tau) \right) - F \left( \frac{\tau}{\varepsilon} , \psiav(\tau) \right) \right] \md \tau \Big| \Big|_{H^2} 
\\
 \leq C \int_s^t \big| \big| \psieps(\tau) - \psiav(\tau) \big| \big|_{H^2} \md \tau .
\end{split}
\end{equation}

By (\ref{defF}),(\ref{defFopen}), it comes that for any $\tau \geq s$, for any $\varepsilon > 0$,
\begin{equation*}
\begin{split}
\tilde{F} & \left( \frac{\tau}{\varepsilon} , \psieps(\tau), \psiav(\tau) \right) - F \left( \frac{\tau}{\varepsilon} , \psiav(\tau) \right) 
\\
&= -i \left( \alpha(\psiav(\tau)) + \beta(\psiav(\tau)) \sin \left( \frac{\tau}{\varepsilon} \right) \right)  Q_1 \left[ \psieps(\tau) - \psiav(\tau) \right]
\\
&-i \left(  \alpha (\psiav(\tau)) + \beta(\psiav(\tau)) \sin \left( \frac{\tau}{\varepsilon} \right) \right)^2  Q_2 \left[ \psieps(\tau) - \psiav(\tau) \right].
\end{split}
\end{equation*}
As $\psiav$ is bounded in $H^2$,  using (\ref{feedback}) and (\ref{defI}) we get the existence of $M_1>0$ such that for all $\tau \geq s $, 
\begin{equation}
\label{borne_feedback}
|\alpha(\psiav( \tau ))| + |\beta(\psiav(\tau))|  \leq  M_1 .
\end{equation}
As $| \sin \left( \frac{\tau}{\varepsilon} \right) | \leq 1$, we get the existence of $C>0$ independent of $\varepsilon$ such that for any $\tau \geq s$, for any $\varepsilon > 0$,
\begin{equation}
\label{lipsF}
\Big| \Big| \tilde{F} \left( \frac{\tau}{\varepsilon} , \psieps(\tau), \psiav(\tau) \right) - F \left( \frac{\tau}{\varepsilon} , \psiav(\tau) \right) \Big| \Big|_{H^2} 
\leq C \big| \big| \psieps(\tau) - \psiav(\tau) \big| \big|_{H^2} .
\end{equation}
Then the contraction property of $T_A$ implies (\ref{avg_term1}).


\textit{Second step : }
We show that there exists $C>0$ satisfying for all $t \in [s,L]$, for any $\varepsilon > 0$, 
\begin{equation}
\label{avg_term2}
\Big| \Big|  \int_s^t  T_A(t-\tau) \left[ F \left( \frac{\tau}{\varepsilon} ,\psiav(\tau) \right) - F^0(\psiav(\tau)) \right] \md \tau \Big| \Big|_{H^2} \leq C \varepsilon .
\end{equation}

We follow computations on the semigroup $T_A$ done in \cite{HaleLunel90}.
For $(t,v) \in \mathbb{R}^+ \times C^1([s,L],H^2)$, we define $U$ and $H$ by 
\begin{align*}
U(t,v(\cdot)) :&= \int_0^t \big( F(\tau,v(\cdot)) - F^0(v(\cdot)) \big) \md \tau, 
\\
H(t,v) :&= \md_v U(t,v) \dot{v} ,
\end{align*}
where $\dot{v}$ is the time derivative of $v$.

\noindent
Notice that the $T$-periodicity of $F(\cdot,v)$ and the definition of $F^0$ imply that $U(\cdot,v)$ is also $T$-periodic.

\begin{lemme}
\label{lemme_AvgInf}
As $\psiav \in C^1([s,L],H^1_0 \cap H^2)$, we have for any $t \in [s,L]$, 
for any $\varepsilon > 0$,
\begin{align*}
\int_s^t & T_A(t-\tau) \left[ F \left( \frac{\tau}{\varepsilon} ,\psiav(\tau) \right) 
- F^0(\psiav(\tau)) \right] \md \tau = 
\\
&\varepsilon U \left( \frac{t}{\varepsilon} , \psiav(t) \right) - \varepsilon T_A(t-s) U \left( \frac{s}{\varepsilon},\psiav(s) \right) 
\\
+ &\varepsilon A \int_s^t T_A(t-\tau) U \left( \frac{\tau}{\varepsilon},\psiav(\tau) \right) \md \tau 
- \varepsilon \int_s^t T_A(t-\tau) H \left( \frac{\tau}{\varepsilon} , \psiav(\tau) \right) \md \tau .
\end{align*}
\end{lemme}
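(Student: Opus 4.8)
The plan is to read the identity as an integration by parts in the variable $\tau$, in which the fast oscillation is absorbed into the primitive $U$ and the unbounded generator $A$ is moved around via the standard semigroup differentiation rule. First I would record the two elementary consequences of the definition of $U$: since $U(t,z)=\int_0^t\big(F(\sigma,z)-F^0(z)\big)\md\sigma$, one has $\partial_t U(t,z)=F(t,z)-F^0(z)$, while $\md_v U(t,z)$ denotes its Fr\'echet derivative in the state variable. Because $\psiav\in C^1([s,L],H^1_0\cap H^2)$, the chain rule applies to $\tau\mapsto U(\tau/\varepsilon,\psiav(\tau))$ and yields
\[
\frac{\md}{\md\tau}U\!\left(\tfrac{\tau}{\varepsilon},\psiav(\tau)\right)=\frac{1}{\varepsilon}\left[F\!\left(\tfrac{\tau}{\varepsilon},\psiav(\tau)\right)-F^0\big(\psiav(\tau)\big)\right]+H\!\left(\tfrac{\tau}{\varepsilon},\psiav(\tau)\right).
\]
Thus the averaged discrepancy $F(\tau/\varepsilon,\psiav(\tau))-F^0(\psiav(\tau))$ equals $\varepsilon$ times the total time derivative of $U(\tau/\varepsilon,\psiav(\tau))$ minus $\varepsilon H(\tau/\varepsilon,\psiav(\tau))$.

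Substituting this expression into the integral on the left-hand side of the claimed identity immediately produces the last term $-\varepsilon\int_s^t T_A(t-\tau)H(\tau/\varepsilon,\psiav(\tau))\md\tau$, and reduces the remaining task to evaluating $\varepsilon\int_s^t T_A(t-\tau)\frac{\md}{\md\tau}G(\tau)\,\md\tau$ with $G(\tau):=U(\tau/\varepsilon,\psiav(\tau))$. Here I would invoke the integration by parts formula for a $C_0$-semigroup, following the computations of \cite{HaleLunel90}: differentiating the product $T_A(t-\tau)G(\tau)$ and using $\frac{\md}{\md\tau}T_A(t-\tau)=-A\,T_A(t-\tau)=-T_A(t-\tau)A$ gives
\[
\int_s^t T_A(t-\tau)\frac{\md}{\md\tau}G(\tau)\,\md\tau=G(t)-T_A(t-s)G(s)+A\int_s^t T_A(t-\tau)G(\tau)\,\md\tau.
\]
Multiplying by $\varepsilon$ and reinserting $G(\tau)=U(\tau/\varepsilon,\psiav(\tau))$ reproduces exactly the first three terms on the right-hand side of the lemma, which completes the identity.

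The delicate point is the legitimacy of this semigroup integration by parts, since $A$ is unbounded: I must ensure that $G(\tau)$ takes values in $D(A)=H^2\cap H^1_0$, that $\tau\mapsto T_A(t-\tau)G(\tau)$ is genuinely $H^2$-differentiable, and that $A$ may be extracted from the integral $\int_s^t T_A(t-\tau)G(\tau)\md\tau$. This is precisely where the extra regularity of the data is consumed. Since $V,Q_1,Q_2\in C^\infty(\overline{D},\R)$ and multiplication by such functions maps $H^2\cap H^1_0$ into itself, $F(\cdot,\psiav(\tau))$ and hence $U(\tau/\varepsilon,\psiav(\tau))$ lie in $D(A)$; moreover, Proposition \ref{prop_reg} guarantees $\Delta\psiav\in C^0([s,L],H^1_0\cap H^2)$, so both $\psiav$ and $\dot\psiav=A\psiav+F^0(\psiav)$ are regular enough for $U$ and $H$ to be continuous with values in $D(A)$. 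With these regularity facts in hand the differentiations and the extraction of $A$ are all justified, and the identity holds in $H^2$ for every $t\in[s,L]$ and every $\varepsilon>0$.
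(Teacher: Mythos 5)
Your proof is correct and follows essentially the same route as the paper, which simply cites \cite[Lemma 2.2]{HaleLunel90}: the chain rule for $\tau\mapsto U(\tau/\varepsilon,\psiav(\tau))$ followed by the semigroup integration by parts is exactly the computation behind that cited lemma. You additionally spell out the regularity bookkeeping (values in $D(A)$, $\Delta\psiav\in C^0([s,L],H^1_0\cap H^2)$ from Proposition \ref{prop_reg}, closedness of $A$ for pulling it out of the integral) that the paper leaves implicit in the citation, which is exactly where the hypothesis $\psi^0\in X_0$ is consumed.
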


\begin{proof}
The proof is done in \cite[Lemma 2.2]{HaleLunel90} .
\end{proof}

\noindent
We study separately each term of the previous right-hand side.

\noindent
$\bullet$
With $\kappa= \lfloor \frac{t}{\varepsilon T} \rfloor$, we have $\frac{t}{\varepsilon} - \kappa T \in [0,T]$ and by periodicity 
\begin{align*}
U \left( \frac{t}{\varepsilon} , \psiav(t) \right) &= \int_0^{t/\varepsilon} \Big( F(\tau,\psiav(t)) - F^0(\psiav(t)) \Big) \md \tau \\
&= \int_0^{t/\varepsilon -\kappa T} \Big( F(\tau,\psiav(t)) - F^0(\psiav(t)) \Big) \md \tau.
\end{align*}
\\
As $\psiav$ is bounded in $H^2$ and $\alpha(\psiav)$, $\beta(\psiav)$ are bounded there exists $M_2 >0$ such that
\begin{equation*}
|| F(\tau, \psiav(t) )||_{H^2} \leq M_2, 
\quad || F^0(\psiav(t)) ||_{H^2} \leq M_2,
\quad \forall \tau \geq 0, \forall t \geq s.
\end{equation*}
This leads to 
\begin{equation*}
\Big| \Big| U \left( \frac{t}{\varepsilon} , \psiav(t) \right) \Big| \Big|_{H^2}
\leq \int_0^{t/\varepsilon -\kappa T} 2M_2 \md \tau 
\leq 2 M_2 T,  \quad \forall t \geq s, \forall \varepsilon > 0.
\end{equation*} 
\\
The same computations lead to 
\begin{equation*}
\Big| \Big| T_A(t-s) U \left( \frac{s}{\varepsilon} , \psiav(s) \right) \Big| \Big|_{H^2}
\leq 2 M_2 T, 
\quad \forall t \geq s, \forall \varepsilon > 0.
\end{equation*} 
\\
Then,
\begin{equation}
\label{avg_term2_1}
\Big| \Big|  \varepsilon U \left( \frac{t}{\varepsilon} , \psiav(t) \right) + \varepsilon T_A(t-s) U \left( \frac{s}{\varepsilon} , \psiav(s) \right) \Big| \Big|_{H^2} \leq C \varepsilon, 
\quad \forall t \geq s, \forall \varepsilon>0.
\end{equation}

\noindent
$\bullet$
By switching property, 
\begin{equation*}
A \int_s^t T_A (t-\tau) U \left( \frac{\tau}{\varepsilon} ,\psiav(\tau) \right) \md \tau
= \int_s^t T_A (t-\tau) A U \left( \frac{\tau}{\varepsilon} ,\psiav(\tau) \right) \md \tau ,
\end{equation*}
and for any $t \in [s,L]$, for any $\varepsilon > 0$
\begin{align*}
A U\left( \frac{t}{\varepsilon} , \psiav(t) \right) &= 
A \int_0^{t/\varepsilon - \kappa T} \left[ F( \tau ,\psiav(t)) - F^0(\psiav(t)) \right] \md \tau \\
&= \int_0^{t/\varepsilon - \kappa T} \left[ A F( \tau ,\psiav(t)) - A F^0(\psiav(t)) \right] \md \tau .
\end{align*}

\noindent
By definition of $F$ and $F^0$ we have
\begin{align*}
A F(t,z) &= -i(\alpha(z) + \beta(z) \sin(t)) A (Q_1z) - i(\alpha(z) + \beta(z) \sin(t))^2 A(Q_2 z),
\\
A F^0(z) &= -i \alpha(z) A (Q_1z) - i \left( \alpha(z)^2 + \frac{1}{2} \beta(z)^2 \right) A(Q_2z) .
\end{align*}

\noindent
By regularity hypothesis on $Q_1$, $Q_2$ and $V$ there exists $C>0$ such that
\begin{equation*}
|| A (Q_1 z) ||_{H^2} \leq C ||\Delta z||_{H^2}, \quad ||A (Q_2 z)||_{H^2} \leq C || \Delta z||_{H^2} .
\end{equation*}
Thus thanks to Proposition \ref{prop_reg} and the bound (\ref{borne_feedback}) on $\alpha(\psiav)$ and $\beta(\psiav)$,  we get the existence of $M_3 >0$ satisfying
\begin{equation*}
|| A F(\tau,\psiav(t)) ||_{H^2} \leq M_3 , \quad ||A F^0(\psiav(t))||_{H^2} \leq M_3, \quad \forall \tau \geq 0, \forall t \in [s,L].
\end{equation*}

\noindent
So, for any $t \in [s,L]$, for any $\varepsilon >0$, $ \Big| \Big| A U \big( \frac{t}{\varepsilon},\psiav(t) \big) \Big| \Big|_{H^2} \leq 2M_3T$.
\noindent
Consequently , there exists $C>0$ such that
\begin{equation}
\label{avg_term2_2}
\Big| \Big| \varepsilon A \int_s^t T_A (t-\tau) U \left( \frac{\tau}{\varepsilon} ,\psiav(\tau) \right) \md \tau  \Big| \Big|_{H^2}
\leq C \varepsilon, \quad \forall t \in [s,L], \forall \varepsilon >0 .
\end{equation}

$\bullet$
For the last term we need to estimate $ H\left( \frac{t}{\varepsilon} ,\psiav(t) \right)$. We have
\begin{align*}
H \left( \frac{t}{\varepsilon} ,\psiav(t) \right) 
&= \md_v U \left( \frac{t}{\varepsilon} , \psiav(t) \right) . \partial_t \psi_{av}(t) \\
&= \int_0^{t/\varepsilon - \kappa T} \Big( \md_v F(\tau , \psiav).\partial_t \psi_{av} - \md F^0(\psiav).\partial_t \psi_{av} \Big) \md \tau .
\end{align*}

\noindent
Using (\ref{feedback}) and (\ref{defI}), we have for any $v,w \in C^0([s,L], H^1_0 \cap H^2)$, 
\begin{equation}
\label{diff_a_b}
\md \alpha(v).w = -k \md I_1(v).w , \quad
\md \beta (v).w = g' \big( I_2(v) \big) \md I_2(v).w,
\end{equation}
where, 
\begin{align*}
\md I_j(v).w & = \text{Im} \Big[ \gamma \lag (-\Delta +V)P(Q_j w) , (-\Delta+V) Pv \rag \\
&+ \gamma \lag (-\Delta +V)P(Q_j v) , (-\Delta+V) Pw \rag \\
& - \lag Q_j w, \phi\rag \lag \phi,v\rag - \lag Q_j v, \phi\rag \lag \phi,w\rag \Big].
\end{align*}

Finally, we have
\begin{align}
\notag
\md_v F(t,v).w &= -i(\alpha(v) + \beta(v) \sin t) Q_1 w - i( \md \alpha(v).w + \md \beta(v).w \sin t) Q_1 v 
\\
\label{diff_F}
- i (\alpha(v) + \beta(v) & \sin t)^2  Q_2w  - 2i (\alpha(v) + \beta(v) \sin t) ( \md \alpha(v).w + \md \beta(v).w \sin t) Q_2 v ,
\end{align}
and
\begin{align}
\notag
\md F^0(v).w &= -i \alpha(v) Q_1 w - i \md \alpha(v).w Q_1 v - i \left( \alpha(v)^2 + \frac{1}{2} \beta(v)^2 \right) Q_2 w 
\\
\label{diff_F0}
-i (2 \alpha(v) &\md \alpha(v).w  + \beta(v) \md \beta(v).w \sin t) Q_2v .
\end{align}

By Proposition \ref{prop_reg}, $\partial_t \psi_{av} \in C^0([0,+\infty), H^1_0 \cap H^2)$ so there exists $M_4 > 0$ such that
\begin{equation*}
|| \partial_t \psi_{av} (t) ||_{H^2} \leq M_4, \quad \forall t \in [s,L].
\end{equation*}
Hence the same computations as previously lead to the existence of $C>0$ satisfying
\begin{equation*}
| \md \alpha(\psiav(t)). \partial_t \psi_{av}(t) | + | \md \beta(\psiav(t)). \partial_t \psi_{av}(t) | \leq C ,
 \quad \forall t \in [s,L],
\end{equation*}
and thus by (\ref{diff_F}),(\ref{diff_F0}),  for any $t \in [s,L]$, for any $\tau \geq 0$,
\begin{equation*}
|| \md_v F(\tau ,\psiav(t)).\partial_t \psi_{av}(t) ||_{H^2} + || \md F^0(\psiav(t)).\partial_t \psi_{av}(t) ||_{H^2} 
\leq C .
\end{equation*}
As a consequence,
\begin{equation*}
\Big| \Big| H \left( \frac{\tau}{\varepsilon},\psiav(\tau) \right) \Big| \Big|_{H^2} \leq CT,
\quad \forall \tau \in [s,L], \forall \varepsilon>0,
\end{equation*}
\\
and then,
\begin{equation}
\label{avg_term2_3}
\Big| \Big| \varepsilon \int_s^t T_A(t-\tau) H \left( \frac{\tau}{\varepsilon},\psiav(\tau) \right) \Big| \Big|_{H^2} \md \tau
\leq (CLT) \varepsilon.
\end{equation}

We are now able to deal with the remaining term of the right-hand side of (\ref{demo_avg1}).
Gathering inequalities (\ref{avg_term2_1}), (\ref{avg_term2_2}) and (\ref{avg_term2_3}) in Lemma \ref{lemme_AvgInf} we obtain that there exists $C > 0$ such that inequality (\ref{avg_term2}) holds.

\textit{Third step : }
Putting together (\ref{demo_avg1}), (\ref{avg_term1}) and (\ref{avg_term2}) we obtain that there exists $C>0$ such that for any $t \in [s,L]$, for any $\varepsilon>0$,
\begin{equation*}
|| \psieps(t) - \psiav(t) ||_{H^2} \leq  C \varepsilon + C \int_s^t ||\psieps(\tau) - \psiav(\tau) ||_{H^2} \md \tau .
\end{equation*}
\\
Hence Gr\"onwall's lemma implies
\begin{equation*}
|| \psieps(t) - \psiav(t) ||_{H^2} \leq C \varepsilon e^{C(t-s)} \leq (C e^{C(L-s)}) \varepsilon,
\quad \forall t \in [s,L],
\end{equation*}
and Proposition \ref{avgprop} is proved with $\displaystyle{ \varepsilon_0 = \dfrac{\delta}{C e^{C(L-s)}} }$ .
\end{proof}

\begin{rmq}
The proof we used is fundamentally based on the boundedness of $\Delta \psiav(t)$ on $[s,L]$ and on Gr\"onwall's lemma so it cannot be extended directly to an infinite time interval $[s,+\infty)$.
\end{rmq}

\section{Explicit approximate controllability}
\label{sect_result}

$\indent$ The solution $\psiav$ of the averaged system (\ref{eqaverage2}),(\ref{feedback}), can be driven in the $H^2$ weak topology to the target set $\mathcal{C}$. 
The solution $\psieps$ of the system (\ref{syst}) associated to the same initial condition, with control $u^{\varepsilon}$, stays close to $\psiav$ on every finite time interval provided that the control is oscillating enough. 
Gathering these two results we prove Theorem \ref{theo_suite}.

\begin{proof}[Proof of Theorem \ref{theo_suite}]
We consider $s<2$ fixed.

By Theorem \ref{convergence}, we can construct an increasing time sequence $(T_n)_{n \in \N}$ tending to $+\infty$ such that for any $n \in \N$,
\begin{equation}
\label{conclu_stab}
\text{dist}_{H^s}(\psiav(t) ,\CC) \leq \frac{1}{2^{n+1}}, \quad \forall t \geq T_n .
\end{equation}

Using Proposition \ref{avgprop} on the time interval $[0,T_{n+1}]$ we then construct a decreasing sequence $(\varepsilon_n)_{n \in \N}$ such that for any $n \in \N$,
\begin{equation}
\label{conclu_avg}
|| \psieps(t) - \psiav(t) ||_{H^s} \leq \frac{1}{2^{n+1}}, \quad \forall t \in [0,T_{n+1}], \forall \varepsilon \in (0, \varepsilon_n) .
\end{equation}

Then (\ref{conclu_stab}),(\ref{conclu_avg}) imply that
\begin{equation*}
\forall n \in \N, \quad
\text{dist}_{H^s}(\psieps(t), \CC) \leq \frac{1}{2^n}, \quad \forall t \in [T_n, T_{n+1}], \forall \varepsilon \in (0, \varepsilon_n),
\end{equation*}
which is the statement of Theorem \ref{theo_suite}.
\end{proof}

\section{Numerical simulations}
\label{sect_simulations}

This section is dedicated to numerical simulations of system (\ref{syst}). First, we detail how we approximate the solutions of (\ref{syst}) and (\ref{eqaverage2}). Then, we check the validity of the implemented code. Finally, we illustrate different aspects of Theorem \ref{theo_suite} and of the averaging property, Proposition \ref{avgprop}.

\subsection{Settings}

In all what follows, we set $D=[0,1]$. As the potential $V$ will vary in this section, the eigenelements of $-\Delta+V$ are denoted $\varphi_{k,V}$ and $\lambda_{k,V}$. Any function $\psi \in L^2((0,1),\C)$ is approximated by its first $M$ modes
\begin{equation*}
\psi(t) \approx \sum_{k=1}^M x_k(t) \varphi_{k,V.}
\end{equation*}
The unknown eigenvectors $\varphi_{k,V}$ are approximated in the following way
\begin{equation*}
\varphi_{k,V} \approx \sum_{j=1}^N a_j^k \varphi_{k,0}.
\end{equation*}
The equality $(-\Delta + V) \varphi_{k,V} = \lambda_{k,V} \varphi_{k,V}$ leads to $B a^k = \lambda_{k,V} a^k$ with
\begin{equation*}
a^k = (a^k_1, \dots, a^k_N)^t, 
\quad
B = \text{diag} (\lambda_{1,0} , \dots, \lambda_{N,0}) + \big( \lag V \varphi_{i,0}, \varphi_{j,0} \rag \big)_{1 \leq i, j \leq N}.
\end{equation*}
Notice that $\lambda_{k,0} = (k \pi)^2$ and $\varphi_{k,0} = \sqrt{2} \sin(k \pi \cdot)$ are explicit.
The scalar products are approximated by the Matlab function {\tt quadl}. The eigenelements $a^k$ and $\lambda_{k,V}$ are then approximated by the Matlab function {\tt eig}.

\subsection{Approximation of $\psieps$ and $\psiav$}

Let
\begin{equation*}
H_0 := \text{diag} (\lambda_{1,V} , \dots , \lambda_{M,V}),
\quad
H_n := \big( \lag Q_n \varphi_{i,V} , \varphi_{j,V} \rag \big)_{1 \leq i,j \leq M}, \; n \in \{1,2\}.
\end{equation*}
It follows that the feedback laws (\ref{defI}) are approximated, for $X \in \R^M$ and $j \in \{1,2\}$, by
\begin{align*}
I_j(X) :&= \text{Im} \Big( \gamma \big( H_0 (0 , (H_j X)_2 , \dots , (H_j X)_M)^t \big)^t \, \big( H_0 ( 0 , \overline{x_2}, \dots , \overline{x_M}) \big) 
\\
&- (H_j X)_1 \overline{x_1} \Big),
\end{align*}
leading to
\begin{equation*}
\alpha(X) := -k I_1(X), \quad \beta(X) := - \min(I_2(X), 0).
\end{equation*}
Thus, if we define $X_{av}$, $X_{\varepsilon} \in \R^M$, systems (\ref{syst}) and (\ref{eqaverage2}) are approximated by
\begin{equation}
\label{syst_discret}
i \frac{\md}{\md t} X_{\varepsilon} = \big( H_0 + u_{\varepsilon}(t) H_1 + u_{\varepsilon}(t)^2 H_2 \big) X_{\varepsilon},
\end{equation}
and
\begin{equation}
\label{syst_av_discret}
i \frac{\md}{\md t} X_{av} = \big( H_0 + \alpha( X_{av}) H_1 + ( \alpha^2(X_{av}) + \frac{1}{2} \beta(X_{av})^2 ) H_2 \big) X_{av},
\end{equation}
where $u_{\varepsilon}(t) = \alpha( X_{av}(t)) + \beta(X_{av}(t)) \sin(t/\varepsilon)$. Equations (\ref{syst_discret}) and (\ref{syst_av_discret}) are solved numerically (simultaneously) using Euler method with a time step $\md t$ and a Strang splitting method.

\subsection{Validation}

We now prove the validity of the implemented code. The eigenvectors $\varphi_{k,V}$ are approximated by $N=50$ modes. We take, as a test case, $V(x) := (x-1/2)^2$, $Q_1(x) := x^2$ and $Q_2(x) := x$. The considered initial condition is $\psi^0 =  \frac{1}{\sqrt{2}} \varphi_{1,V} + \frac{i}{\sqrt{2}} \varphi_{2,V}$. The value of the oscillating parameter is $\varepsilon = 10^{-3}$. The parameter $\gamma$ is chosen such that $\LL(\psi^0) = 3/4$. We compute the discrete Lyapunov function for the averaged system and the $H^s$ norm (with $s=1.8$) to the ground state for both the oscillating and the averaged system. The time scale is $[0,T]$ with $T=1000$ and a time step $\md t=10^{-3}$. For $M=5$, we get the results presented in Figure \ref{fig_validation_code}.
\begin{figure}[H]
\centering
\includegraphics[width=5cm]{./Figures/lyapu.eps}
\hspace{1cm}
\includegraphics[width=5cm]{./Figures/normeHs.eps}
\caption[]
{\label{fig_validation_code} Lyapunov function of the averaged system (left). $H^s$ norm to the ground state
(right) for the averaged system (continuous line) and the oscillating system (dashed line).}
\end{figure}
\noindent
As expected, we observe the convergence of the Lyapunov function to $0$. The solutions of (\ref{syst_discret}) and (\ref{syst_av_discret}) are driven to the ground state (up to a global phase).
To validate the simulations, we have also tested the code for $M=10$ and $M=20$. We obtained the same asymptotic behaviour and the same values for the Lyapunov function and the $H^s$ distance to the target. 

\noindent
As the approximate controllability uses the fact that the controls are oscillating, the time step $\md t$ cannot be taken large with respect to the oscillating parameter $\varepsilon$. For $\varepsilon=10^{-3}$, we obtain the same results with $\md t = 10^{-3}$ and $\md t = 10^{-4}$. However, instabilities appear on the oscillating system for $\md t= 10^{-2}$. Thus, in all what follows the time step will be chosen smaller than $\varepsilon$. We now present several simulations to illustrate various aspects of Theorem \ref{theo_suite}.

\subsection{Influence of the initial condition}

For every other initial condition tested, the asymptotic behaviour is the same. We present here the results for the same parameters as in Figure \ref{fig_validation_code} but with the initial condition $\psi^0 = \frac{1}{\sqrt{3}} \varphi_{1,V} + \frac{1}{\sqrt{3}} \varphi_{2,V} + \frac{i}{\sqrt{3}} \varphi_{3,V}$. In this case, the stabilization of the averaged system is slower and we computed it for $T=5000$.
\begin{figure}[H]
\centering
\includegraphics[width=5cm]{./Figures/figure71.eps}
\hspace{1cm}
\includegraphics[width=5cm]{./Figures/figure72.eps}
\caption[]
{\label{fig_cond_ini} Lyapunov function of the averaged system (left). $H^s$ norm to the ground state
(right) for the averaged system (continuous line) and the oscillating system (dashed line).}
\end{figure}
We observe the same asymptotic behaviour as in Figure \ref{fig_validation_code}.

\subsection{Averaging strategy}

We present numerically the influence of the oscillating parameter $\varepsilon$. First, we consider the same potential, dipolar and polarizability moments as in Figure \ref{fig_validation_code}. We compute the discrete $H^s$ norm (for $s=1.8$) to the ground state (up to a global phase) and the discrete $H^2$ norm of $X_{av} - X_{\varepsilon}$. Figure \ref{fig_epsilon_1} is obtained with $\varepsilon = 10^{-3}$ while Figure \ref{fig_epsilon_2} is obtained with $\varepsilon = 10^{-4}$. Both are computed with a time step $\md t = \varepsilon$ and final time $T = 500$.
\begin{figure}[H]
\centering
\includegraphics[width=5cm]{./Figures/figure32.eps}
\hspace{1cm}
\includegraphics[width=5cm]{./Figures/figure33.eps}
\caption[]
{\label{fig_epsilon_1} $H^s$ norm to the ground state
(left) for the averaged system (continuous line) and the oscillating system (dashed line). $H^2$ gap from the average (right).}
\end{figure}
\begin{figure}[H]
\centering
\includegraphics[width=5cm]{./Figures/figure42.eps}
\hspace{1cm}
\includegraphics[width=5cm]{./Figures/figure43.eps}
\caption[]
{\label{fig_epsilon_2} $H^s$ norm to the ground state
(left) for the averaged system (continuous line) and the oscillating system (dashed line). $H^2$ gap from the average (right).}
\end{figure}
For a fixed parameter $\varepsilon$, we observe that the $H^2$ distance between the solution of (\ref{syst}) and the solution of (\ref{eqaverage2}) with the same initial condition does not increase as the time goes to infinity but rather tends to a limit value. This limit value is of the same order of magnitude as $\varepsilon$. We observe that
\begin{equation*}
\frac{||X_{av}(T) - X_{10^{-3}}(T)||_{H^2}}{||X_{av}(T) - X_{10^{-4}}(T)||_{H^2}} \approx 30.
\end{equation*}
This validates numerically the results of Proposition \ref{avgprop} and indicates that this averaging property should be valid on an infinite time horizon.

The same behaviour has been obtained with other parameters. We present here the simulations with $Q_1(x) := cos(x)$ and $Q_2(x) := cos(2x)$, inspired by the physical situation of alignment dynamic of a HCN molecule as in \cite{Dion_2}. Figure \ref{fig_epsilon_3} is obtained with $\varepsilon = 10^{-3}$ while Figure \ref{fig_epsilon_4} is obtained with $\varepsilon = 10^{-4}$. Both are computed with a time step $\md t = \varepsilon$ and final time $T = 1000$ (as the stabilization process seems slower in this case).
\begin{figure}[H]
\centering
\includegraphics[width=5cm]{./Figures/figure52.eps}
\hspace{1cm}
\includegraphics[width=5cm]{./Figures/figure53.eps}
\caption[]
{\label{fig_epsilon_3} $H^s$ norm to the ground state
(left) for the averaged system (continuous line) and the oscillating system (dashed line). $H^2$ gap from the average (right).}
\end{figure}
\begin{figure}[H]
\centering
\includegraphics[width=5cm]{./Figures/figure62.eps}
\hspace{1cm}
\includegraphics[width=5cm]{./Figures/figure63.eps}
\caption[]
{\label{fig_epsilon_4} $H^s$ norm to the ground state
(left) for the averaged system (continuous line) and the oscillating system (dashed line). $H^2$ gap from the average (right).}
\end{figure}

\begin{rmq}
Although the time scales at stake in these simulations can seem very large, one has to remember that the Schr\"odinger equation is considered in atomic unity.
\end{rmq}

\section{Conclusion, open problems and perspectives}

$\indent$ In this article we have defined explicit oscillating controls that drive the solution of our system arbitrarily close to the ground state provided that the control is oscillating enough and the time is large enough. To achieve this we have used and developed tools from the theory of finite dimension dynamical systems and applied them to the considered Schr\"odinger equation. We managed by adding a mathematically and physically meaningful term to weaken the previous assumptions on the coupling realized by this model. The assumptions that were made are proved to be generic with respect to the functions determining the system (potential, dipolar and polarizability moments). 
The results presented should be generalizable to a compact manifold with the Laplace-Beltrami operator. We performed numerical simulations to illustrate the approximate controllability. This gives numerical bounds on the time scale and on the values of the oscillating parameter needed to drive any initial condition arbitrarily close to the ground state.

A challenging question would be to prove an approximation property of the averaged system on an infinite time interval $[s,+\infty)$. This would lead to approximate stabilization to the ground state. Based on the numerical simulations, this result seems to hold. Unfortunately the tools developed here are really based on the finite time interval and cannot be extended directly. 
\noindent
In \cite{BeauchardLaurent}, Beauchard and Laurent proved the local exact controllability in $H^3$ around the ground state for the system (\ref{syst}) in the dipolar approximation (i.e. $Q_2 \equiv 0$) under some coupling assumptions in one dimension. If one manages to extend their result to the system (\ref{syst}) with suitable assumptions on $Q_2$, this may lead to a global exact controllability result around the ground state, at least for the one dimensional case. The main difficulty would be to obtain the approximate convergence in the same functional setting as their local exact controllability result and with coherent assumptions on the polarizability moment.


\paragraph*{Acknowledgements :} The author would like to thank K. Beauchard for having interested him in this problem and for fruitful discussions.


\nocite{MRT}

\bibliography{biblio}
\bibliographystyle{mcss} 

\end{document}